\let\csname equation*\endcsname\relax
\let\csname endequation*\endcsname\relax
\newcommand{\IR}{\mathbb{R}}
\newcommand{\IN}{\mathbb{N}}
\newcommand{\IX}{\mathbb{X}}
\newcommand{\argmax}{\mathop{\mathrm{argmax}}}
\newcommand{\norm}[2]{     \| #1       \|_{ #2 }}
\newcommand{\normiii}[2]{\vert\kern-0.25ex\vert\kern-0.25ex\vert #1 \vert\kern-0.25ex \vert\kern-0.25ex\vert_{ #2 }}
\newcommand{\Normiii}[2]{\left\vert\kern-0.25ex\left\vert\kern-0.25ex\left\vert #1 \right\vert\kern-0.25ex\right\vert\kern-0.25ex\right\vert_{ #2 }}
\newcommand{\scalar}[2]{     \langle #1       \rangle_{ #2 }}
\newtheorem{lemma}{Lemma}[section]
\newtheorem{proposition}[lemma]{Proposition}
\theoremstyle{remark}
\theoremstyle{definition}
\newtheorem{example}[lemma]{Example}
\definecolor{someblue}{rgb}{0,0,.8}
\providecommand{\REV}[1]{{#1}}
\newcommand\sectiom{\vfil\penalty-9999\vfilneg\section}
\newcommand\sectiom*{\vfil\penalty-9999\vfilneg\section*}
\begin{document}

\title%
[\hfill Tikhonov and Landweber rates by interpolation spaces \hfill]%
{Tikhonov and Landweber convergence rates: characterization by interpolation spaces}

\author{R.~Andreev}

\address{%
	Univ Paris Diderot, Sorbonne Paris Cit\'e, LJLL (UMR 7598 CNRS), F-75205 Paris, France
}
\ead{roman.andreev@upmc.fr}
\vspace{10pt}
\begin{indented}
\item[]\today
\end{indented}

\begin{abstract}
	Algebraic convergences rates 
	of (iterated) Tikhonov regularization
	for linear inverse problems in Hilbert spaces
	are 
	characterized by 
	the membership of 
	the exact solution 
	to
	intermediate spaces
	produced by the K-method of real interpolation.
	Similar results are obtained for the Landweber iteration.
\end{abstract}


\vspace{2pc}
\noindent{\it Keywords}: 
Tikhonov, Landweber, regularization, convergence rates, sharp converse results,
interpolation spaces, K-functional,
47A52, 
65F10, 
65R30, 
65J20, 
65J22  


\section{Introduction}
\label{s:0}

It has long been known
that 
the H\"older type 
source conditions 
originally used
to obtain algebraic convergence rates
for Tikhonov regularization
are only sufficient,
not necessary.
Consequently,
other concepts 
have been introduced 
to completely characterize those rates
such as 
the spectral decay condition of \cite[Theorem 2.1]{Neubauer1997},
distance functions \cite[p.3]{FlemmingHofmannMathe2011}, 
and
variational source conditions 
\cite{Flemming2012};
pointers to the origins of those concepts can be found therein.
We propose here a more concise condition
through 
the notion of interpolation spaces
and 
establish links 
to the concepts just listed.
Specifically,
we argue that
the intermediate spaces
$(E_0, E_1)_{\theta, q}$
produced by the $K$-method of real interpolation
\cite{BerghLofstrom1976, BennettSharpley1988}
with fine index $q = \infty$
naturally capture 
the essential behavior
of (iterated) Tikhonov regularization,
that is:
convergence rates, converse results, and saturation,
in the noise-free and the noisy case.
This is not unexpected, given 
the resemblance of the $K$-functional \eqref{e:K}
and 
the Tikhonov functional \eqref{e:J},
but we systematically quantify this connection
by careful estimates
with particular attention to the limiting cases
$\theta = 0, 1$.
In a similar vein, 
the relationship between near-minimizers for ``$L$-functionals''
and
Tikhonov regularization
was highlighted
in \cite[Chapter 6]{KislyakovKruglyak2013}.
We prove analogous convergence and converse results for 
the Landweber iteration,
and comment on the applicability of the discrepancy principle
as a stopping rule.

This note consists of two main parts.
In the first part (Section \ref{s:i})
we develop the required preliminaries
of the $K$-method of real interpolation,
introduce the different intermediate subspaces,
and describe them and their interrelations using 
spectral theory in Hilbert spaces.
In passing, we relate
to the concepts of distance functions 
and 
variational source conditions.
In the second part (Section \ref{s:ip})
we elaborate on how
the convergence rates of
Tikhonov regularization
and
Landweber iteration
are characterized 
in terms of the intermediate subspaces.

We write $A \lesssim B$
to mean that there is a constant $C \geq 0$ 
independent of the parameters specified by quantifiers
such that $A \leq C B$.
If, in addition, $B \lesssim A$, we write $A \sim B$.

\section{Preliminaries}
\label{s:i}

\subsection{Interpolation spaces}

Let $X$ be a Banach space (here and henceforth: over the reals).
Let $X_1 \subset X$ be another Banach space,
continuously 
embedded in $X$.
We write $\norm{\cdot}{0} := \norm{\cdot}{}$ and $\norm{\cdot}{1}$
for the norms of $X$ and $X_1$, respectively.
The $K$-functional
is defined as
\begin{align}
\label{e:K}
	K_t(x) :=
	\inf_{x_1 \in X_1}
	(
		\norm{x - x_1}{0}^2
		+
		t^2
		\norm{x_1}{1}^2
	)^{1/2}
	,
	\quad x \in X,
	\quad t > 0.
\end{align}
For real $0 < \theta < 1$ and $1 \leq q \leq \infty$
the $K$-method of real interpolation
defines intermediate subspaces 
$X_1 \subset (X, X_1)_{\theta, q} \subset X$
based on the integrability of $t \mapsto K_t(x)$.
Here we are only interested in the case $q = \infty$ 
with one of the spaces embedded into the other,
and therefore
refer to standard sources
such as 
\cite{BerghLofstrom1976, BennettSharpley1988}
for general definitions.
For any $0 \leq \theta \leq 1$ and any $x \in X$ set
\begin{align}
\label{e:||nu}
	\norm{x}{\theta:1}
	:=
	\sup_{t > 0}
	t^{-\theta}
	K_t(x)
	,
\end{align}
possibly infinite.
We point out
that 
we include the limiting cases $\theta = 0$ and $\theta = 1$
in this definition.
The reason for the unusual notation 
will become apparent in Section \ref{s:i:H}
where 
instead of $X_1$
we will 
consider a family of subspaces $X_\gamma \subset X$
parametrized by $\gamma$.
Define the spaces
\begin{align}
	X_{\theta:1} 
	:=
	( X, X_1 )_{\theta, \infty}
	:=
	\{
		x \in X :
		\norm{x}{\theta:1} < \infty
	\}
\end{align}
with the norm $\norm{\cdot}{\theta:1}$.
For $0 < \theta < 1$,
these are Banach spaces.
Moreover,
the following embeddings 
$
	X_1 \subset X_{\theta:1} \subset X
$
are continuous.
The space $X_{\theta:1}$ need not coincide with $X_\theta$
for $\theta = 0,1$,
see the remarks on the Gagliardo completion in \cite[Chapter 5, \S1]{BennettSharpley1988},
but it will be the case in 
the more specific setting of Section \ref{s:i:H}.

\subsection{The constant \texorpdfstring{$N_\theta$}{Nnu}}

For $0 \leq \theta \leq 1$, 
the constant $1 \leq N_\theta \leq \sqrt{2}$ given by
\begin{align}
\label{e:N=sup}
	N_{\theta}^{-2}
	:=
	\theta^\theta (1-\theta)^{1-\theta}
	=
	\sup_{\lambda \in [0, 1]}
	\lambda^\theta (1 - \lambda)^{1 - \theta}
	=
	\sup_{s > 0}
	\frac{s^{2(1 - \theta)}}{s^2 + 1}
\end{align}
will play a recurrent role.
By convention, $N_0 := N_1 := 1$,
making $\theta \mapsto N_\theta$ continuous on $[0, 1]$.
For example,
if
$a, b > 0$
then
Young's inequality with exponents $p = 1/(1-\theta)$ and $q = 1 / \theta$
gives
\begin{align}
\label{e:Nab}
	N_{\theta}^2
	a^{1-\theta} b^{\theta} 
	=
	(a / (1-\theta))^{1-\theta} (b / \theta)^{\theta}
	\leq
	a + b
	.
\end{align}
As another example,
for any real $\lambda \geq 0$ and $t > 0$,
any real $k \geq 1/2$, and any $0 \leq \theta \leq 1$,
\begin{align}
\label{e:Ncomm}
	N_\theta^{2 (1 - 2 k)} 
	\frac{t^{2 (1 - \theta)}}{t^2 + \lambda^{2 k}}
	\leq
	\left(
		\frac{\alpha^{1 - \theta}}{\alpha + \lambda}
	\right)^{2k}
	\quad\text{for}\quad
	\alpha 
	=
	t^{1/k} \left( \frac{1 - \theta}{\theta} \right)^{1 - 1/(2k)}
	.
\end{align}
Indeed, after algebraic simplification, 
the inequality in
\eqref{e:Ncomm}
is equivalent to
$
	(\alpha + \lambda)^p
	\leq
	(1-\theta)^{1-p} \alpha^p + \theta^{1-p} \lambda^p
$,
itself a consequence of H\"older's inequality
with $p = 2k \geq 1$ as one of the exponents.


\subsection{Distance function}
\label{s:i:dr}

Distance functions were introduced
as a means
to characterize the regularization error of linear regularization operators
in \cite{FlemmingHofmannMathe2011},
previously also in \cite[Theorem 2.12]{BakushinskyKokurin2004}.
We briefly comment on the relation to the $K$-functional.
Let $0 < \theta < 1$.
Fix $x \in X$.
Define
the distance function
\begin{align}
\label{e:dr} 
	d(r) 
	:=
	\inf
	\{ \norm{ x - x_1 }{0} : x_1 \in X_1, \; \norm{x_1}{1} \leq r \}
	,
	\quad
	r \geq 0
	.
\end{align}
This function is nonnegative,
nonincreasing,
bounded by $d(0) = \norm{x}{0}$,
and convex.
%
%
%
%
To simplify the notation,
we shall write $\norm{x_1}{1} \leq r$, or similar,
without mentioning that $x_1 \in X_1$.
It is clear the $d$ has compact support
if and only if $x \in X_1$.

Inspection of the definitions 
of the $K$-functional \eqref{e:K} and the distance function \eqref{e:dr}
reveals
that
\begin{align}
\label{e:Kd*}
	K_t(x) = \inf_{r > 0} (d(r)^2 + t^2 r^2)^{1/2}
	\sim
	\inf_{r > 0} (d(r) + t r)
	=
	-d^*(-t)
	,
\end{align}
where $d^*$ is the Legendre--Fenchel conjugate of $d$.
Thus the distance function \eqref{e:dr} 
characterizes the subspace $X_{\theta:1} \subset X$.
The first characterization,
boundedness
of $|t^{-\theta} d^*(-t)|$,
is obvious from \eqref{e:||nu} and \eqref{e:Kd*}.
The second characterization
is the behavior of $d$ at infinity,
more precisely the identity
\begin{align}
\label{e:D=E}
	E 
	=
	N_{\theta}^{-1}
	\norm{x}{\theta:1}
	=
	D 
	,
\end{align}
where
\begin{align}
\label{e:ED}
 	E
	:=
	\sup_{r > 0}
	\inf_{ \norm{x_1}{1} \leq r }
	\norm{ x - x_1 }{0}^{1-\theta} 
	\norm{x_1}{1}^\theta
	\quad\text{and}\quad
 	D
	:=
	\sup_{r > 0}
	d(r)^{1 - \theta} 
	r^\theta
	.
\end{align}
From \eqref{e:D=E} one infers
the more qualitative observation
that
$x \in X_{\theta:1}$
if and only if
the distance function \eqref{e:dr} exhibits
the asymptotic decay rate
$d(r) = \mathcal{O}(r^{-\theta / (1-\theta)})$ for $r \to \infty$.

\begin{proof}[{Proof of \eqref{e:D=E}}]
	The proof 
	is in the three steps:

	a) $E \leq N_{\theta}^{-1} \norm{x}{\theta:1}$.
	%
	%
	Estimating the $\inf$ of $E$ as in \eqref{e:Nab}
	for $a := \norm{ x - x_1 }{0}^2$ and $b := t^2 \norm{x_1}{1}^2$,
	\begin{align*}
		\inf_{\norm{x_1}{1} \leq r} (\cdots)
	\leq
		N_{\theta}^{-1}
		t^{-\theta}
		\inf_{\norm{x_1}{1} \leq r}
		(
			\norm{x - x_1}{0}^2
			+
			t^2 \norm{x_1}{1}^2
		)^{1/2}
	\leq
		N_{\theta}^{-1}
		\sup_{t > 0}
		t^{-\theta}
		K_t(x)
		.
	\end{align*}
	Here,
	the condition $\norm{x_1}{1} \leq r$ is redundant
	if $t$ is large enough,
	so the infimum becomes $K_t(x)$,
	explaining the second inequality.
	Taking the supremum over $r > 0$ proves the claim.
	
	b) $\norm{x}{\theta:1} \leq N_{\theta} D$.
	The definition \eqref{e:ED} of $D$ 
	implies
	$d(r) \leq (r^\theta / D)^{1/(\theta - 1)}$.
	Using this in \eqref{e:Kd*},
	then
	computing the infimum
	yields
	$K_t(x) \leq t^\theta N_{\theta} D$.
	Now
	multiply by $t^{-\theta}$
	and take $\sup_{t > 0}$.

	c) $D \leq E$.
	If $D$ is finite then $d(\infty) = 0$.
	Given that $d$ is convex,
	$d$ is strictly decreasing
	(unless where it vanishes).
	Thus,
	for each $r > 0$,
	the infimum in $d(r)$
	is achieved at $\norm{x_1}{1} = r$.
	Concerning $E$,
	we may suppose 
	that 
	the $\sup \inf$ is assumed at $\norm{x_1}{1} = r$,
	adjusting $r$ if necessary.
	Hence, both $D$ and $E$ equal
	$
		\sup_{r > 0} 
		\inf_{ \norm{x_1}{1} = r }
		\norm{ x - x_1 }{0}^{1-\theta} 
		\norm{x_1}{1}^\theta
	$.
	This establishes \eqref{e:D=E}.
\end{proof}

\subsection{Interpolation inequality of operators}

Let $Y$ be a Banach space.
Let $S : X \to Y$ be a bounded linear operator with norm $C_0 \geq 0$.
Assume that 
$S|_{X_1} : X_1 \to Y$ 
is also a bounded linear operator, with norm $C_1 \geq 0$.
Then, for any $0 < \theta < 1$, 
\begin{align}
\label{e:|Snu|}
	\norm{S x}{Y}
	\leq
	N_{\theta}
	C_0^{1-\theta} C_1^\theta
	\norm{x}{\theta:1}
	\quad
	\forall x \in X_{\theta:1}
	.
\end{align}
%
%
Indeed,
let $x \in X_{\theta:1}$.
Given any $x_1 \in X_1$, 
write $x = (x - x_1) + x_1$,
apply the triangle inequality with boundedness of $S$,
and estimate by Cauchy--Schwarz:
$ 
	\norm{S x}{Y}
	\leq
	C_0 \norm{x - x_1}{0}
	+
	C_1 \norm{x_1}{1}
	\leq
	t^\theta (C_0^2 + t^{-2} C_1^2)^{1/2}
	\times
	t^{-\theta}
	(\norm{x - x_1}{0}^2 + t^2 \norm{x_1}{1}^2)^{1/2}
	.
$ 
Inserting 
an
infimum over $x_1 \in X_1$ 
then a supremum over $t > 0$
in the second factor,
in view of \eqref{e:||nu}
we obtain
$ 
	\norm{S x}{Y}
	\leq
	t^\theta (C_0^2 + t^{-2} C_1^2)^{1/2}
	\times
	\norm{x}{\theta:1}
	.
$ 
Minimization over $t > 0$ gives \eqref{e:|Snu|}.

%

\subsection{A lemma for measures}
\label{s:}

We will 
call a nonnegative finite measure $\mu$ on Borel subsets of $[0, \infty)$
a ``Borel measure on $[0, \infty)$''.
For any such $\mu$ and any real $\nu \geq 0$
we define
$\normiii{\mu}{\nu}$
by
\begin{align}
	\normiii{\mu}{\nu}^2
	:=
	\sup_{t > 0} t^{-2\nu} \mu([0, t))
	.
\end{align}

The following Lemma 
records
two useful properties of $\normiii{\cdot}{\nu}$.

\begin{lemma}
\label{l:mu}
	Let $\mu$ be a Borel measure on $[0, \infty)$.
	Let $\nu > 0$.
	Then
	\begin{align}
	\label{e:Tail2}
		\mu([0, \Lambda))
		+
		\Lambda^{2 \gamma}
		\int_{[\Lambda, \infty)}
		\lambda^{-2 \gamma}
		d\mu(\lambda)
		\leq
		\frac{\gamma}{\gamma - \nu}
		\Lambda^{2 \nu}
		\normiii{\mu}{\nu}^2
		\quad
		\forall \Lambda > 0
		\quad
		\forall \gamma > 0
		,
	\end{align}
	and
	\begin{align}
	\label{e:Tail1}
		\int_{[0, \Lambda)} \lambda^{-2 r} d\mu(\lambda)
	\leq
		\frac{r + \nu}{\nu}
		\Lambda^{2 \nu}
		\normiii{\mu}{r + \nu}^2
	\quad
		\forall \Lambda > 0
		\quad
		\forall r \geq 0
		,
	\end{align}
	whenever the right-hand-side is finite.
\end{lemma}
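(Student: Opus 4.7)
The plan is a layer-cake computation: for both inequalities I would rewrite the integral against $\mu$ as a double integral, swap the order via Fubini, and then invoke the defining bound $\mu([0,t)) \le t^{2\nu}\normiii{\mu}{\nu}^2$ to reduce the statement to an elementary Lebesgue integral.

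For \eqref{e:Tail2}, I would first combine the two terms on the left into the single integral
\[
\Lambda^{2\gamma}\int_{[0,\infty)}\bigl(\max(\Lambda,\lambda)\bigr)^{-2\gamma}\,d\mu(\lambda),
\]
and then expand $s^{-2\gamma} = 2\gamma\int_s^\infty t^{-2\gamma-1}\,dt$ with $s=\max(\Lambda,\lambda)$. Fubini rewrites this as $2\gamma\Lambda^{2\gamma}\int_\Lambda^\infty t^{-2\gamma-1}\mu([0,t])\,dt$. Substituting $\mu([0,t]) \le t^{2\nu}\normiii{\mu}{\nu}^2$ (valid since $\mu$ is finite, so $\mu([0,t]) = \lim_{s\downarrow t}\mu([0,s))$ transfers the $\normiii{\cdot}{\nu}$ bound by continuity of $t \mapsto t^{2\nu}$) leaves $\int_\Lambda^\infty t^{2\nu-2\gamma-1}\,dt = \Lambda^{2(\nu-\gamma)}/(2(\gamma-\nu))$, which converges precisely when $\gamma>\nu$ (the only range in which the stated inequality is non-trivial). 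Multiplying the constants produces the factor $\gamma/(\gamma-\nu)$.

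For \eqref{e:Tail1} a dual identity is cleaner. On $[0,\Lambda)$ write
\[
\lambda^{-2r} = \Lambda^{-2r} + 2r\int_\lambda^\Lambda t^{-2r-1}\,dt,
\]
which is the fundamental theorem of calculus for $r>0$. Integrating against $d\mu$ and applying Fubini yields
\[
\int_{[0,\Lambda)}\lambda^{-2r}\,d\mu(\lambda) = \Lambda^{-2r}\mu([0,\Lambda)) + 2r\int_0^\Lambda t^{-2r-1}\mu([0,t))\,dt.
\]
Bounding $\mu([0,\tau)) \le \tau^{2(r+\nu)}\normiii{\mu}{r+\nu}^2$ in both places, the remaining integral $\int_0^\Lambda t^{2\nu-1}\,dt = \Lambda^{2\nu}/(2\nu)$ collapses the right-hand side to $\Lambda^{2\nu}\normiii{\mu}{r+\nu}^2(1 + r/\nu) = \frac{r+\nu}{\nu}\Lambda^{2\nu}\normiii{\mu}{r+\nu}^2$, as claimed. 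The degenerate case $r=0$ is immediate from the definition of $\normiii{\cdot}{\nu}$ and the value $(r+\nu)/\nu = 1$.

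The only mild obstacle is the cosmetic swap between half-open and closed intervals, $\mu([0,t))$ versus $\mu([0,t])$, produced by Fubini. It is resolved by continuity from above of a finite measure, which lets any $\normiii{\cdot}{\beta}$-type bound transfer from the half-open to the closed endpoint without changing the constant.
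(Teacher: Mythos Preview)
Your argument is correct and matches the paper's proof essentially step for step: both reduce the left-hand sides to integrals of the distribution function $t \mapsto \mu([0,t))$ against a power weight and then insert the defining bound $\mu([0,t)) \le t^{2\beta}\normiii{\mu}{\beta}^2$; the paper reaches that expression via Riemann--Stieltjes integration by parts, while you reach the identical formula by writing $s^{-2\gamma} = 2\gamma\int_s^\infty t^{-2\gamma-1}\,dt$ and applying Fubini, which is the same computation unpacked. The paper also records an alternative proof via the level-set form of the layer-cake identity, $\int f\,d\mu = \int_0^\infty \mu(\{f>s\})\,ds$, which is a cousin of your approach organized from the other side.
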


\begin{proof}
	Define the left-continuous function
	$I(\Lambda) := \mu([0, \Lambda))$
	for $\Lambda \geq 0$.
	Fix $\Lambda > 0$.
	Writing the integral as a Riemann--Stieltjes integral, 
	and 
	integrating by parts we have
	\begin{align}
		\int_{[\Lambda, \infty)}
		\lambda^{-2 \gamma}
		d\mu(\lambda)
		=
		\int_{[\Lambda, \infty)}
		\lambda^{-2 \gamma}
		d I(\lambda)
		=
		-\Lambda^{-2 \gamma} I(\Lambda) 
		+
		2 \gamma
		\int_{[\Lambda, \infty)}
		\lambda^{-2 \gamma - 1}
		I(\lambda)
		d \lambda
		.
	\end{align}
	Estimating 
	$I(\lambda) \leq \lambda^{2 \nu} \normiii{\mu}{\nu}^2$ 
	under the integral,
	evaluating,
	and rearranging
	leads to \eqref{e:Tail2}.
	Similarly,
	\begin{align}
		\int_{[0, \Lambda)}
		\lambda^{-2 r}
		d \mu(\lambda)
	\leq
		\int_{[0, \Lambda]} 
		\lambda^{-2 r}
		d I(\lambda)
	=
		\left. 
			\lambda^{-2 r} I(\lambda) 
		\right|_{\lambda = 0}^{\lambda \searrow \Lambda}
		+
		2 r
		\int_{[0, \Lambda]}
		\lambda^{-2 r - 1}
		I(\lambda)
		d \lambda
		.
	\end{align}
	Estimating $I(\lambda) \leq \lambda^{2 (r + \nu)} \normiii{\mu}{r + \nu}^2$
	and evaluating the integral yields \eqref{e:Tail1}.
\end{proof}

{%
\begin{proof}[Alternative proof] 
	Fix $\Lambda > 0$.
	Ad \eqref{e:Tail2}:
	Consider
	$A_s := [\Lambda, \infty) \cap \{ \lambda \geq 0 : \lambda^{-2 \gamma} > s \}$.
	If $s \geq \Lambda^{-2 \gamma}$ then $A_s$ is empty,
	otherwise
	$A_s = [\Lambda, s^{-1/(2\gamma)})$.
	Therefore
	\begin{align}
		\int_{[\Lambda, \infty)} \lambda^{-2 \gamma} d\mu(\lambda)
		=
		\int_0^{\Lambda^{-2 \gamma}} \mu(A_s) ds
		=
		-\Lambda^{-2\gamma} \mu([0, \Lambda))
		+
		\int_0^{\Lambda^{-2 \gamma}} \mu([0, s^{-1/(2\gamma)})) ds
	\end{align}
	Estimating $\mu([0, t)) \leq t^{2 \nu} \normiii{\mu}{\nu}^2$ under the integral
	and evaluating
	yields \eqref{e:Tail2}.
	Ad \eqref{e:Tail1}:
	The statement is trivial for $r = 0$, so suppose $r > 0$.
	Consider 
	$B_s := [0, \Lambda) \cap \{ \lambda \geq 0 : \lambda^{-2 r} > s \}$.
	If $s \leq \Lambda^{-2r}$ 
	then $B_s = [0, \Lambda)$ 
	and 
	$\mu(B_s) \leq \Lambda^{2 (\nu + r)} \normiii{\mu}{\nu + r}^2$.
	Otherwise
	$B_s = [0, s^{-1/(2r)})$ ,
	so that
	$\mu(B_s) \leq s^{-(\nu + r)/r} \normiii{\mu}{\nu + r}^2$.
	Using this in 
	$
		\int_{[0, \Lambda)} \lambda^{-2 r} d\mu(\lambda)
	=
		\int_0^\infty \mu(B_s) ds
	$
	yields \eqref{e:Tail1}.
	%
\end{proof}
}
	
\subsection{Spectral theory in Hilbert spaces}
\label{s:i:H}

Suppose that $X$ and $Y$ are real Hilbert spaces.
Let $T : X \to Y$ be a nonzero bounded linear operator.
Replacing $X$ by $X / \ker T$ if necessary
(with the usual quotient norm),
we assume that
\begin{align}
\label{e:inj}
	\text{$T$ is injective}
	.
\end{align}
Let $E$ denote the projection valued spectral measure of $T^* T$.
Then $E$ is compactly supported in $[0, \infty)$ since $T$ is bounded,
and injectivity of $T$ is equivalent to $E_{\{0\}} = 0$,
since 
$\mathop{\mathrm{range}}(E_{\{\lambda\}}) = \ker( T^* T - \lambda I)$.
For $x \in X$ we define 
the Borel measure $\mu_x$ on $[0, \infty)$
by
$\mu_x(A) := \norm{E_A x}{}^2$
and
the associated quantity
\begin{align}
\label{e:normiii}
	\normiii{x}{\nu} 
	:= 
	\normiii{\mu_x}{\nu}
	=
	\sup_{t > 0}
	t^{-\nu}
	\norm{ E_{[0, t)} x }{}
	,
	\quad
	\nu \geq 0
	.
\end{align}
The subset 
\begin{align}
	\IX_\nu :=
	\{
		x \in X:
		\normiii{x}{\nu} < \infty
	\}
\end{align}
of $X$
is indeed a Banach space 
equipped with the norm
$\normiii{\cdot}{\nu}$.
A description of this subspace 
as an interpolation space
is subsequently given 
in 
Proposition \ref{p:ga-nu}.
The definition of $\IX_\nu$ is inspired by 
the work \cite{Neubauer1997}.

For all real $\gamma \geq 0$,
we define the Banach space $X_\gamma \subset X$ 
as
\begin{align}
\label{e:Xga}
	X_\gamma := \mathop{\mathrm{range}} ( (T^* T)^\gamma )
	\quad\text{with the norm}\quad
	\norm{x}{\gamma}
	:=
	\norm{ (T^* T)^{-\gamma} x }{},
	\quad
	x \in X_\gamma
	.
\end{align}
%
%
%
%
%
In terms of the spectral measure $E$, we can write
(note $E_{\{0\}} = 0$)
\begin{align}
\label{e:0ga}
	\norm{x}{0}^2
	=
	\int_{[0, \infty)} d \mu_{x}(\lambda)
	\quad\text{and}\quad
	\norm{x}{\gamma}^2
	=
	\int_{[0, \infty)} \lambda^{-2\gamma} d \mu_{x}(\lambda)
	\quad
	\forall
	x \in X_\gamma
	.
\end{align}

For any real $0 \leq \nu \leq \gamma$,
we define
the interpolation space
\begin{align}
	X_{\nu : \gamma}
	:=
	( X, X_\gamma )_{\nu/\gamma, \infty}
	\quad\text{with norm}\quad
	\norm{\cdot}{\nu : \gamma}
	,
\end{align}
and will denote the corresponding $K$-functional by $K_t^\gamma$.
One can check
(most easily in the case that $T$ is compact)
that
for any $x \in X$ and $t > 0$,
\begin{align}
\label{e:KtI}
	| K_t^\gamma(x) |^2
=
	\int_{[0, \infty)}
	\inf_{\epsilon > 0}
	\{
		(1 - \epsilon)^2
		+
		\epsilon^2 
		t^2 
		\lambda^{-2 \gamma}
	\}
	d\mu_{x}(\lambda)
=
	\int_{[0, \infty)}
	\frac{t^2}{t^2 + \lambda^{2 \gamma}}
	d\mu_{x}(\lambda)
	.
\end{align}
For the limiting cases $\nu = 0$ and $\nu = \gamma$
we recover from \eqref{e:0ga} and \eqref{e:KtI}
that
\begin{align}
\label{e:X01}
	X_{0 : \gamma} = X_0
	\quad\text{and}\quad
	X_{\gamma : \gamma} = X_\gamma
	\quad
	\text{with equality of norms}
	.
\end{align}
%
%
%
%


The different spaces are related by the following Proposition.

\begin{proposition}
\label{p:ga-nu}
	Let $0 \leq \nu < \gamma$.
	%
	%
	Let $x \in X$.
	Then
	\begin{align}
	\label{e:ga-nu}
		\sqrt{ 1 - \nu / \gamma }
		\norm{x}{\nu : \gamma}
	\stackrel{a)}{\leq}
		\normiii{x}{\nu}
	\stackrel{b)}{\leq}
		N_{\nu / \gamma}
		\norm{x}{\nu : \gamma}
	\stackrel{c)}{\leq}
		\norm{x}{\nu}
	\stackrel{d)}{\leq}
		\norm{T^* T}{}^{\gamma - \nu}
		\norm{x}{\gamma}
	.
	\end{align}
	Hence, the following embeddings are continuous:
	\begin{align}
		X_{\gamma}
		\subset
		X_\nu
		\subset
		X_{\nu : \gamma}
		=
		\IX_\nu
		,
	\end{align}
	where
	$X_{\nu : \gamma} = \IX_\nu$ 
	with equivalence of norms 
	possibly not uniform in $\nu < \gamma$.
\end{proposition}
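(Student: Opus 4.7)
The plan is to reduce every inequality in \eqref{e:ga-nu} to a manipulation of the spectral integrals \eqref{e:0ga} and \eqref{e:KtI}. After that reduction, each step amounts to a pointwise comparison of the integrand $\frac{s^2}{s^2 + \lambda^{2\gamma}}$ (or the indicator $\chi_{[0,t)}(\lambda)$) with a power of $\lambda$, followed by either a one-variable optimization or an application of Lemma \ref{l:mu}. Once the four inequalities are in hand, the chain of embeddings $X_\gamma \subset X_\nu \subset X_{\nu:\gamma} = \IX_\nu$ is merely a reading of the statement: $d)$ gives the first embedding, $c)$ the second, and $a)$ together with $b)$ identifies $X_{\nu:\gamma}$ with $\IX_\nu$ up to equivalent norms whose equivalence constants degenerate as $\nu \to \gamma$ because of the factor $\sqrt{1 - \nu/\gamma}$ in $a)$.

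Inequality $d)$ is immediate: on the support of $\mu_x$ one has $\lambda \leq \norm{T^* T}{}$, hence $\lambda^{-2\nu} \leq \norm{T^* T}{}^{2(\gamma - \nu)} \lambda^{-2\gamma}$, and integrating against $d\mu_x$ finishes it. For inequality $c)$ I would factor $\lambda^{-2\nu}$ out of the integrand in \eqref{e:KtI} and invoke the identity $\sup_{\lambda > 0} \frac{s^2 \lambda^{2\nu}}{s^2 + \lambda^{2\gamma}} = N_{\nu/\gamma}^{-2} s^{2\nu/\gamma}$, which follows from the third form of \eqref{e:N=sup} after the substitution $\lambda^{2\gamma} = s^2 w$. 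Multiplying by $s^{-2\nu/\gamma}$ and taking $\sup_{s > 0}$ then yields $c)$.

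Inequality $b)$ is the most delicate one, since the constant must be sharp. The plan is to bound the indicator pointwise in $\lambda$ by the Tikhonov weight: for $\lambda < t$ one has $1 \leq \frac{s^2 + t^{2\gamma}}{s^2 + \lambda^{2\gamma}} \leq \frac{s^2 + t^{2\gamma}}{s^2} \cdot \frac{s^2}{s^2 + \lambda^{2\gamma}}$, while for $\lambda \geq t$ the indicator vanishes. Integrating against $\mu_x$ and using \eqref{e:KtI} gives $\mu_x([0, t)) \leq \frac{s^2 + t^{2\gamma}}{s^{2(1 - \nu/\gamma)}} \norm{x}{\nu:\gamma}^2$. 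What remains is to divide by $t^{2\nu}$ and minimize over $s > 0$; setting $\theta = \nu/\gamma$, the minimum is attained at $s^2 = t^{2\gamma}(1 - \theta)/\theta$ and takes the value $\theta^{-\theta}(1-\theta)^{\theta - 1} t^{2\nu} = N_{\nu/\gamma}^2 t^{2\nu}$ by \eqref{e:N=sup}. Taking $\sup_{t > 0}$ yields $b)$. Getting the sharp constant here is the main obstacle in the proof, but the pointwise test function makes it accessible.

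Inequality $a)$ is Lemma \ref{l:mu} in disguise. Starting from \eqref{e:KtI} and writing $t = s^{1/\gamma}$, I would split the integral at $\lambda = t$: the integrand is bounded by $1$ on $[0, t)$ and by $s^2/\lambda^{2\gamma} = t^{2\gamma}/\lambda^{2\gamma}$ on $[t, \infty)$. This produces precisely the left-hand side of \eqref{e:Tail2}, which the lemma bounds by $\frac{\gamma}{\gamma - \nu} t^{2\nu} \normiii{x}{\nu}^2 = \frac{\gamma}{\gamma - \nu} s^{2\nu/\gamma} \normiii{x}{\nu}^2$; multiplying by $s^{-2\nu/\gamma}$ and taking $\sup_{s > 0}$ then gives $\norm{x}{\nu:\gamma}^2 \leq (1 - \nu/\gamma)^{-1} \normiii{x}{\nu}^2$, which is $a)$. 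Care is needed only in matching exponents under the substitution $t = s^{1/\gamma}$ so that the prefactor from \eqref{e:Tail2} lines up exactly with $1 - \nu/\gamma$.
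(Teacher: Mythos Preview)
Your proof is correct and matches the paper's approach step for step: $d)$ via the support bound, $c)$ via the identity \eqref{e:N=sup} applied to the integrand of \eqref{e:KtI}, $b)$ via the pointwise comparison $\chi_{[0,t)}(\lambda) \leq \frac{s^2+t^{2\gamma}}{s^2+\lambda^{2\gamma}}$ followed by optimization in $s$ (the paper simply inserts the optimal $s$ directly rather than minimizing afterwards), and $a)$ via the split at $\lambda = t^{1/\gamma}$ and Lemma~\ref{l:mu}\eqref{e:Tail2}. The only difference is notational: your $s$ and $t$ are the paper's $t$ and $s$, respectively.
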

\begin{proof}
	The inequality
	(\ref{e:ga-nu}d) follows
	from
	the representation \eqref{e:0ga},
	restricting the domain of integration
	to $[0, \norm{T^* T}{}]$.
	The inequality (\ref{e:ga-nu}c)
	is obtained by 
	identifying $s := t \lambda^{-\gamma}$ and $\theta := \nu / \gamma$
	in \eqref{e:N=sup},
	and using it in \eqref{e:KtI},
	viz.~%
	\begin{align}
		\norm{x}{\nu : \gamma}^2 
		= 
		\sup_{t > 0} t^{-2 \nu/\gamma} |K_t^\gamma(x)|^2
		\leq
		N_{\nu / \gamma}^{-2}
		\int_{[0, \infty)} \lambda^{-2 \nu} d \mu_x(\lambda)
		.
	\end{align}
	For the inequality (\ref{e:ga-nu}b)
	we use the identity
	\begin{align}
		s^{-2\nu}
		=
		N_{\nu/\gamma}^2
		t^{-2\nu/\gamma}
		\frac{ t^2 }{ t^2 + s^{2 \gamma} }
		\quad\text{for}\quad
		t = 
		s^\gamma
		\sqrt{ \frac{\gamma - \nu}{\nu} }
		,
	\end{align}
	combined with $\lambda \leq s$
	in the first step of 
	\begin{align*}
		s^{-2\nu}
		\norm{
			E_{[0, s)} x
		}{}^2
		\leq
		N_{\nu/\gamma}^2
		t^{-2\nu/\gamma}
		\int_{[0, s)}
		\frac{ t^2 }{ t^2 + \lambda^{2 \gamma} }
		d\mu_x(\lambda)
		\stackrel{\eqref{e:KtI}}{\leq}
		N_{\nu/\gamma}^2
		t^{-2\nu/\gamma}
		| K_t^\gamma(x) |^2
		.
	\end{align*}
	Taking
	$\sup_{t > 0}$ on the right,
	then
	$\sup_{s > 0}$ on the left
	gives (\ref{e:ga-nu}b).
	Finally,
	from \eqref{e:KtI}, 
	followed by \eqref{e:Tail2},
	we have 
	\begin{align*}
		\norm{x}{\nu:\gamma}^2
	& \leq
		\sup_{t > 0} t^{-2 \nu/\gamma}
		\inf_{s > 0}
		\left\{
			\norm{
				E_{[0, s)} x
			}{}^2
			+
			s^2 \int_{[s, \infty)} \lambda^{-2\gamma} d \mu_x(\lambda)
		\right\}
	\leq
		\frac{\gamma}{\gamma - \nu}
		\normiii{x}{\nu}^2
		,
	\end{align*}
	with the choice $s := t^{1/\gamma}$ for the last inequality,
	and this shows (\ref{e:ga-nu}a).
	The last claim is a combination of (\ref{e:ga-nu}a) and (\ref{e:ga-nu}b).
\end{proof}


%

%
The choice $\gamma := 2 \nu$ in \eqref{e:ga-nu}
leads to the chain of inequalities
\begin{align}
\label{e:infg}
	\tfrac{1}{\sqrt{2}} \normiii{\cdot}{\nu}
	\leq
	\inf_{\gamma > \nu} \norm{\cdot}{\nu : \gamma}
	\leq
	\norm{\cdot}{\nu : 2 \nu}
	\leq
	\sqrt{2} \normiii{\cdot}{\nu}
	,
\end{align}
and therefore,
$X_{\nu : 2 \nu} = \IX_\nu$ 
with equivalence of norms uniformly in $\nu \geq 0$.
Equality between
$\inf_{\gamma > \nu} \norm{\cdot}{\nu : \gamma}$
and
$\norm{\cdot}{\nu : 2 \nu}$
does not hold in general.
However, 
if $\mu_x$ is a Dirac measure supported at some $\lambda_0 > 0$, 
and $\nu > 0$,
then the infimum of
\begin{align}
	\norm{x}{\nu : \gamma}^2
	=
	\sup_{t > 0}
	t^{-2 \nu/\gamma}
	\frac{t^2}{t^2 + \lambda_0^{2 \gamma}}
	\stackrel{\eqref{e:N=sup}}{=}
	N_{\nu / \gamma}^{-2}
	\lambda_0^{-2 \nu}
	,
\end{align}
over $\gamma > \nu$
is indeed achieved at $\gamma = 2 \nu$.

The constants in (\ref{e:ga-nu}a) and (\ref{e:ga-nu}b) are sharp.
For example,
for
$\mu_x = \delta_{\lambda_0}$ being the Dirac measure at $\lambda_0 = 1$,
\begin{align}
	\normiii{x}{\nu}
	=
	1
	\quad\text{and}\quad
	N_{\nu / \gamma}
	\norm{x}{\nu : \gamma}
	=
	1
	.
\end{align}
On the other hand,
for
$d \mu_x(\lambda) = 2 \nu \lambda^{2 \nu - 1} dx$
(that this measure is not compactly supported is not essential)
we find
$\normiii{x}{\nu} = 1$,
while
\begin{align}
	\norm{x}{\nu : \gamma}^2
	= 
	\sup_{t > 0} t^{-2\nu} |K_t^\gamma(x)|^2 = 
	\frac{\pi \nu / \gamma}{\sin(\pi \nu / \gamma)}
	=
	(1 + o(1))
	\frac{\gamma}{\gamma - \nu}
	\quad\text{as}\quad
	\nu \nearrow \gamma
	,
\end{align}
so that
the norm equivalence in \eqref{e:ga-nu}
does 
deteriorate
as $\nu \nearrow \gamma$.
The relation
to
the finite qualification of the (iterated) Tikhonov regularization
is discussed in
Section \ref{s:ip:T}.

We provide next another illustration
of the fact that $\IX_\nu$ is in general strictly larger than $X_\nu$,
here for $\nu := 1$.
We will revisit this example in Section \ref{s:ip:L}, Example \ref{x:IX:LW}.

\begin{example}
\label{x:IX}
	Consider the diagonal operator 
	$T := \mathop{\mathrm{diag}} ((n^{-1/2})_{n \geq 1})$ on
	the sequence space $X := \ell_2(\IN)$.
	Then $T^* T$ has eigenvalues $\lambda_n = n^{-1}$, $n \geq 1$.
	Let $x^\dag := (n^{-3/2})_{n \geq 1}$.
	Then $\mu_{x^\dag} = \sum_{n \geq 1} n^{-3} \delta_{\lambda_n}$,
	so that
	\begin{align}
		\normiii{x^\dag}{1}^2 
		=
		\sup_{N \geq 1}
		\lambda_N^{-2} \mu_{x^\dag}([0, \lambda_N]) 
		= 
		\sup_{N \geq 1}
		N^2
		\sum_{n \geq N} n^{-3}
		< \infty
		,
	\end{align}
	yet,
	$\norm{x^\dag}{1}^2 = \sum_{n \geq 1} n^{-1}$ is not finite.
	Therefore $x^\dag \in \IX_1 \setminus X_1$.
	The $\sup$ is assumed at $N = 1$, 
	so that
	$\normiii{x^\dag}{1}^2 = \zeta(3) \approx (1.096)^2$.
\end{example}

\providecommand{\IX}{\mathbb{X}}
In \cite[Proposition 11]{AndreevElbauDeHoopLiuScherzer2015}
the variational inequality
\begin{align}
\label{e:S}
	\exists \beta \geq 0:
	\quad
	| \scalar{ x, \omega }{} |
	\leq
	\beta
	\norm{ (T^* T)^{\gamma} \omega }{}^{\nu / \gamma}
	\norm{ \omega }{}^{1 - \nu / \gamma}
	\quad
	\forall \omega \in X
	,
\end{align}
was shown 
for $0 < \nu < \gamma$
to hold 
if and only if
$x \in \IX_\nu$.
We prove a more precise statement,
in particular including
the limiting cases $\nu = 0$ and $\nu = \gamma$.
The first part of the proof
(the inequality ``$\leq$'')
simplifies and sharpens
the corresponding part of 
\cite[Proof of Proposition 11]{AndreevElbauDeHoopLiuScherzer2015}.
The second part (the inequality ``$\geq$'')
draws from \cite{Flemming2012}.

\begin{proposition}
\label{p:svi}
	Let $x \in X$ and $0 \leq \nu \leq \gamma$.
	Then
	\begin{align}
	\label{e:p:svi}
		\sup_{\norm{\omega}{} = 1}
		\norm{(T^* T)^{\gamma} \omega}{}^{-\nu/\gamma}
		|\scalar{x, \omega}{}|
		=
		N_{\nu / \gamma}
		\norm{x}{\nu:\gamma}
	.
	\end{align}
	%
\end{proposition}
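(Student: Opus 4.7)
The plan is to prove the two inequalities in \eqref{e:p:svi} separately, writing $\theta := \nu/\gamma \in [0, 1]$ throughout. Both directions should yield the sharp constant $N_\theta$: one via the extremal identity \eqref{e:N=sup}, the other via Young's inequality \eqref{e:Nab}.

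For the direction ``$\leq$'', I would fix $\omega \in X$ with $\|\omega\| = 1$ and set $s := \|(T^*T)^\gamma \omega\|$. For any $x_\gamma \in X_\gamma$ (write $x_\gamma = (T^*T)^\gamma y$ with $\|y\| = \|x_\gamma\|_\gamma$), self-adjointness of $(T^*T)^\gamma$ and Cauchy--Schwarz give $|\langle x, \omega\rangle| \leq \|x - x_\gamma\|\,\|\omega\| + s \|x_\gamma\|_\gamma$ after splitting $x = (x - x_\gamma) + x_\gamma$. For any $t > 0$, a Cauchy--Schwarz step in $\mathbb{R}^2$ against the vector $(1, s/t)$ followed by the infimum over $x_\gamma \in X_\gamma$ upgrades this to $|\langle x, \omega\rangle| \leq \sqrt{1 + s^2/t^2}\, K_t^\gamma(x)$. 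Dividing by $s^\theta$, using $K_t^\gamma(x) \leq t^\theta \|x\|_{\nu:\gamma}$, and setting $u := s/t$, the right-hand side becomes $\|x\|_{\nu:\gamma} \cdot u^{-\theta}\sqrt{1+u^2}$; elementary calculus identifies $\min_{u>0} u^{-\theta}\sqrt{1+u^2} = N_\theta$, attained at $u^2 = \theta/(1-\theta)$, by \eqref{e:N=sup}. The endpoints $\theta \in \{0, 1\}$ are recovered by sending $t \to \infty$ (taking $x_\gamma = 0$) or $t \to 0$, respectively.

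For the direction ``$\geq$'', I would exhibit, for each $t > 0$, a near-extremal $\omega_t \in X$ via spectral calculus. Guided by the resemblance of the $K$-functional \eqref{e:KtI} to the Tikhonov filter, the natural choice is $\omega_t := f_t(T^*T) x$ with $f_t(\lambda) := t/(t^2 + \lambda^{2\gamma})$. A direct computation with the spectral measure $\mu_x$ yields $\langle x, \omega_t\rangle = t^{-1} K_t^\gamma(x)^2$ and, crucially, the identity $t^2 \|\omega_t\|^2 + \|(T^*T)^\gamma \omega_t\|^2 = K_t^\gamma(x)^2$, since both sides equal $\int (t^2+\lambda^{2\gamma})^{-1} t^2\, d\mu_x(\lambda)$ after telescoping. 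Young's inequality \eqref{e:Nab} applied with $a := t^2 \|\omega_t\|^2$ and $b := \|(T^*T)^\gamma \omega_t\|^2$ then gives $N_\theta t^{1-\theta} \|\omega_t\|^{1-\theta} \|(T^*T)^\gamma \omega_t\|^\theta \leq K_t^\gamma(x)$, which rearranges to show that the ratio in \eqref{e:p:svi} evaluated at $\omega_t/\|\omega_t\|$ is at least $N_\theta t^{-\theta} K_t^\gamma(x)$. Taking $\sup_{t > 0}$ completes the lower bound.

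The main delicate point is the choice of $\omega_t$ in the ``$\geq$'' direction: the specific filter $f_t(\lambda) = t/(t^2 + \lambda^{2\gamma})$ is dictated by the requirement that $t^2 \|\omega_t\|^2$ and $\|(T^*T)^\gamma \omega_t\|^2$ sum to exactly $K_t^\gamma(x)^2$, so that Young's inequality can extract the sharp constant $N_\theta$ without slack. A minor technical check ensures $\omega_t \neq 0$ and $(T^*T)^\gamma \omega_t \neq 0$ (both follow from $x \neq 0$ and $E_{\{0\}} = 0$ via \eqref{e:inj}, since $f_t$ and $\lambda^\gamma f_t(\lambda)$ are strictly positive on the spectrum), and the limits $\theta \in \{0, 1\}$ are treated by sending $t \to \infty$ or $t \to 0$ in the construction.
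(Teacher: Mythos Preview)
Your proof is correct. The ``$\leq$'' direction is the paper's argument spelled out: the paper applies the operator interpolation inequality \eqref{e:|Snu|} to the linear form $S : x \mapsto \langle x, \omega\rangle$, which unfolds precisely into your split--Cauchy--Schwarz--optimize sequence.

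The ``$\geq$'' direction, however, is genuinely different and cleaner than the paper's. The paper first disposes of the endpoints $\nu \in \{0, \gamma\}$ by appeal to \cite[Lemma 8.21]{ScherzerBook2009}, then for $0 < \nu < \gamma$ reduces to $\gamma = 1$ and invokes three results from \cite{Flemming2012} that pass from the variational inequality \eqref{e:S} to a bound on the distance function $d(r)$, concluding via \eqref{e:D=E} after what it calls ``straightforward but tedious algebra''. Your explicit near-extremal $\omega_t = f_t(T^*T)x$ with the Tikhonov filter $f_t(\lambda) = t/(t^2 + \lambda^{2\gamma})$, together with the telescoping identity $t^2\|\omega_t\|^2 + \|(T^*T)^\gamma \omega_t\|^2 = K_t^\gamma(x)^2$ and Young's inequality \eqref{e:Nab}, is self-contained, handles all $0 \leq \nu \leq \gamma$ at once (the $\sup_{t>0}$ already covers the endpoints, so your separate endpoint remarks are not strictly needed), and avoids external references entirely. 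The trade-off is that your argument is intrinsically spectral, relying on \eqref{e:KtI} and functional calculus, whereas the paper's detour through distance functions and \cite{Flemming2012} makes the link to variational source conditions explicit---one of the paper's declared goals.
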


\begin{proof}
	For $\nu = 0$ the statement is trivial
	due to
	$\norm{x}{0:\gamma} = \norm{x}{}$.
	For $\nu = \gamma$
	the statement follows from \cite[Lemma 8.21]{ScherzerBook2009}
	and $\norm{x}{\gamma:\gamma} = \norm{x}{\gamma}$.
	In both cases, recall $N_0 = N_1 = 1$.
	For the remainder of the proof 
	we assume $0 < \nu < \gamma$.

	Let $\omega \in X$ 
	and
	consider the linear mapping $S : x \mapsto \scalar{ x, \omega }{}$.
	Then
	$| S x | \leq \norm{\omega}{} \norm{x}{}$,
	and
	$| S x | \leq \norm{(T^* T)^{\gamma} \omega}{} \norm{x}{\gamma}$
	for all $x \in X_{\gamma}$.
	By the operator interpolation inequality \eqref{e:|Snu|}
	we have
	$
		| \scalar{ x, \omega }{} |
	\leq
		N_{\nu/\gamma}
		\norm{x}{\nu : \gamma}
		\norm{(T^* T)^{\gamma} \omega}{}^{\nu/\gamma}
		\norm{\omega}{}^{1 - \nu/\gamma}
	$.
	This implies ``$\leq$'' in \eqref{e:p:svi}.

	To verify ``$\geq$'' in \eqref{e:p:svi},
	it suffices to establish the case $\gamma = 1$,
	then apply it 
	with $(T^* T)^\gamma$ replacing $T^* T$ 
	(also in the definitions of the norms in Section \ref{s:i:H}).
	Thus we assume that \eqref{e:S} holds with $0 < \nu < \gamma = 1$.
	To verify $N_{\nu} \norm{x}{\nu:1} \leq \beta$
	we check
	$D \leq N_{\nu}^{-2} \beta$
	for the quantity $D$ from \eqref{e:D=E}.
	In other words
	we
	show the bound
	$d(r)^{1 - \nu} r^\nu \leq N_{\nu}^{-2} \beta$
	for
	the distance function $d(\cdot)$ from \eqref{e:dr}.
	To that end we 
	combine \cite[Proposition 2.10]{Flemming2012},
	\cite[Theorem 4.1]{Flemming2012}
	and \cite[Theorem 4.5]{Flemming2012}:
	The inequality
	\begin{align}
		| \scalar{ x, \omega }{} |
		\leq
		\beta
		\norm{ (T^* T) \omega }{}^{\frac{\kappa}{2 - \kappa}}
		\quad
		\forall \omega \in X
		\quad\text{with}\quad
		\norm{\omega}{} = 1,
	\end{align}
	with 
	\begin{align}
		\kappa := 
		\tfrac{2 \nu}{1 + \nu}
		\quad\text{and}\quad
		\beta :=
		\tfrac{2 - \kappa}{2}
		\left(
			\tfrac{\tilde\beta}{1 - \kappa} 
		\right)^{\frac{1 - \kappa}{2 - \kappa}}
		a^{\frac{1}{2 - \kappa}}
		\quad
		\text{%
			for some $0 < \tilde\beta \leq 1$ and $a > 0$,
		}
	\end{align}
	implies
	$
		d^2(r) \leq 
		\tilde\beta \, (-\varphi)^*(-2 r)
	$
	for all $r > 0$,
	where $(-\varphi)^*$ is the Legendre--Fenchel conjugate
	of $t \mapsto -\varphi(t) := -a t^\kappa$ defined for $t > 0$.
	Straightforward but tedious algebra
	yields
	the desired estimate 
	$d(r)^{1 - \nu} r^\nu \leq N_{\nu}^{-2} \beta$.
\end{proof}

%




%

\section{Application to linear inverse problems in Hilbert spaces}
\label{s:ip}

\subsection{Linear inverse problem}
\label{s:ip:0}

Let $X$ and $Y$ be Hilbert spaces.
Let $T : X \to Y$ be a bounded linear operator,
with possibly nonclosed range.
We write $\norm{\cdot}{}$ for the norm of $X$ and for that of $Y$.
As in Section \ref{s:i:H},
we assume that $T$ is injective.
Fix $y^\dag \in T X$ and 
let $x^\dag$ denote the solution to
\begin{align}
\label{e:Tx=y}
	T x^\dag = y^\dag
	.
\end{align}
%
Let
$ 
	x_0 \in X,
$ 
called a prior,
be given.
The task is to find an approximation of $x^\dag$,
given $y^\dag$ (noise-free case)
or $y^\delta \approx y^\dag$ (noisy case)
with
\begin{align}
\label{e:ydel}
	\norm{ y^\dag - y^\delta }{}
	\leq
	\delta
	.
\end{align}

%
We use the notation
from Section \ref{s:i:H},
including
the spectral measure $E$, 
the Borel measure $\mu_x$, 
the spaces $X_\gamma$, $X_{\nu:\gamma}$, etc.
%

\subsection{Spectral cut-off regularization}
\label{s:ip:cut}

The spectral cut-off regularization of $x^\dag$ is defined
as $x_0 + E_{[\alpha, \infty)} (x^\dag - x_0)$
for a parameter $\alpha > 0$.
From the definition \eqref{e:normiii} of $\normiii{\cdot}{\nu}$
it is immediate that
the error of this regularization is 
\begin{align}
\label{e:coerr}
	\norm{(I - E_{[\alpha, \infty)}) (x^\dag - x_0)}{}
	=
	\norm{E_{[0, \alpha)} (x^\dag - x_0)}{} 
	\leq 
	\alpha^\nu \normiii{x^\dag - x_0}{\nu}
	\quad
	\forall \alpha \geq 0
	\quad
	\forall \nu \geq 0
	.
\end{align}
Since there are no restrictions on the possible convergence rate $\nu \geq 0$
(referred to as infinite qualification),
and no further constants are involved,
we may view the performance of this regularization as a reference.

\subsection{Tikhonov regularization}
\label{s:ip:T}

For $\alpha > 0$, 
the regularized solution $x_\alpha \in X$ in the noise-free case
is defined 
as the unique minimizer of the Tikhonov functional
\begin{align}
\label{e:J}
	J_\alpha(x; x_0)
	:=
	\norm{ y^\dag - T x }{}^2
	+
	\alpha
	\norm{ x - x_0 }{}^2,
	\quad
	x \in X
	.
\end{align}
Replacing $y^\dag$ by $y^\delta$
defines
the regularized solution $x_\alpha^\delta \in X$ 
in the noisy case.
They are equivalently characterized by
the first order optimality conditions
\begin{align}
\label{e:xx}
	x_\alpha = (T^* T + \alpha I)^{-1} (T^* y^\dag + \alpha x_0)
	\quad\text{and}\quad
	x_\alpha^\delta = (T^* T + \alpha I)^{-1} (T^* y^\delta + \alpha x_0)
	.
\end{align}

Writing $e^\delta := x_\alpha - x_\alpha^\delta$ for the moment,
we have
$ 
	\norm{ T e^\delta }{}^2 + \alpha \norm{ e^\delta }{}^2
=
	\scalar{ (T^* T + \alpha I) e^\delta, e^\delta }{}
=
	\scalar{ T^* (y^\dag - y^\delta), e^\delta }{}
\leq
	\norm{ y^\dag - y^\delta }{} \norm{ T e^\delta }{}
\leq
	\tfrac14 \delta^2 + \norm{ T e^\delta }{}^2
	,
$ 
and cancellation of $\norm{ T e^\delta }{}^2$ on both ends gives
the error splitting 
\begin{align}
\label{e:xad}
	\norm{ x^\dag - x_\alpha^\delta }{}
	\leq
	\norm{ x^\dag - x_\alpha }{}
	+
	\norm{ e^\delta }{}
	\leq
	\norm{ x^\dag - x_\alpha }{}
	+
	\tfrac12
	\tfrac{\delta}{\sqrt{\alpha}}
	.
\end{align}
%

The parameter $\alpha > 0$ is determined by
a parameter choice strategy
\begin{align}
\label{e:pcs}
	\bar{\alpha} : 
	(\delta, y^\delta, \ldots) \mapsto \bar{\alpha}(\delta, y^\delta, \ldots)
	.
\end{align}
The one that minimizes 
$\alpha \mapsto \norm{x^\dag - x_\alpha^\delta}{}$
whenever $y^\delta$ and $y^\dag$ are fixed
may be considered the optimal strategy.
We shall suppose that
the parameter choice strategy 
satisfies
\begin{align}
\label{e:qo}
	\delta^{-2\nu}
	\sup_{{{\eqref{e:ydel}}}}
	\norm{x^\dag - x_{\bar{\alpha}(\delta, y^\delta, \ldots)}^\delta}{}^{2\nu + 1}
	\sim
	\sup_{\alpha > 0}
	\alpha^{-\nu}
	\norm{x^\dag - x_{\alpha}}{}
	\quad
	\forall
	\delta > 0
	,
\end{align}
where the hidden constants 
do not depend on $\delta$, the exact solution $x^\dag$, or the prior $x_0$,
but may depend on $\nu \geq 0$.
Here, $\sup_{{{\eqref{e:ydel}}}}$
means the supremum over all $y^\delta \in Y$
which satisfy \eqref{e:ydel}.
As an example,
the a priori parameter choice strategy
$\bar{\alpha}$
defined by
\begin{align}
\label{e:pcs1}
	\sqrt{\bar{\alpha}(\delta, y^\dag)}
	\norm{ x^\dag - x_{\bar{\alpha}(\delta, y^\dag)} }{}
	\stackrel{{!}}{=}
	\tfrac12
	\delta
	\quad
	\forall \delta > 0
\end{align}
satisfies \eqref{e:qo}.
Specifically, 
the error splitting
\eqref{e:xad} quickly yields
$\text{LHS} \leq 2 \, \text{RHS}$ in \eqref{e:qo}
and
an inspection of
\cite[Proof of Theorem 2.6]{Neubauer1997}
yields
$\frac{13}{2} (5 / \sqrt{2})^{2 \nu + 1} \text{LHS} \geq \text{RHS}$.
That proof
also shows that the optimal strategy, see above,
satisfies \eqref{e:qo}.
Of course, of practical interest are 
parameter choice strategies
that do not access 
the exact data $y^\dag$ or the exact solution $x^\dag$;
\REV{%
in that regard 
the notion 
of quasioptimality by Raus \& H\"amarik \cite{RausHamarik2007}
is useful,
see comments following Proposition \ref{p:tik} below.
}
%
%
In any case, 
\eqref{e:qo} formalizes
the equivalence
\begin{align}
	\text{``}
		\norm{ x^\dag - x_{\bar{\alpha}(\delta, y^\delta, \ldots)}^\delta }{}
		\lesssim
		\delta^{2 \nu / (2 \nu + 1)}
		\quad
		\forall \delta > 0
	\text{''}
	\quad\Leftrightarrow\quad
	\text{``}
		\norm{x^\dag - x_\alpha}{}
		\lesssim
		\alpha^\nu
		\quad
		\forall \alpha > 0
	\text{''}
\end{align}
of the error estimates 
in the noisy and in the noise-free 
cases,
and
in the following we assume \eqref{e:qo},
and focus on its \text{RHS}.
%

%

From the abstract theory of interpolation,
convergence rates of 
the Tikhonov regularization error $\norm{x^\dag - x_\alpha}{}$
for $x^\dag \in X_{\nu:1}$ when $0 < \nu < 1$
quickly follow.
Indeed,
for $\alpha > 0$
consider the linear mapping
$S_\alpha : X \to X$,
$(x^\dag - x_0) \mapsto (x^\dag - x_\alpha)$.
From \eqref{e:xx},
\begin{align}
\label{e:Sa}
	S_\alpha = \alpha (T^* T + \alpha I)^{-1}
	,
\end{align}
so that
$\norm{S_\alpha }{} \leq 1$.
%
%
Under the classical source condition
\begin{align}
\label{e:cla}
	x^\dag \in 
	x_0 + X_1
	\quad\text{where}\quad
	X_1 = 
	\mathop{\mathrm{range}} (T^* T)
	,
\end{align}
it is known
(and shown below in \eqref{e:p:tik} for $\nu = 1$)
that
\begin{align}
\label{e:a01}
	\norm{ S_\alpha (x^\dag - x_0) }{} 
	\leq 
	\alpha \norm{x^\dag - x_0}{1}
	.
\end{align}
The operator interpolation inequality 
\eqref{e:|Snu|} implies
\begin{align}
\label{e:anu}
	\norm{ x^\dag - x_\alpha }{} 
	=
	\norm{ S_\alpha (x^\dag - x_0) }{}
	\leq
	\alpha^\nu
	N_\nu
	\norm{ x^\dag - x_0 }{\nu:1}
	\quad
	\forall x^\dag \in x_0 + X_{\nu:1}
	\quad
	\forall \alpha > 0
	.
\end{align}
The following Proposition
shows that $x_0 + X_{\nu:1} \subset X$ is precisely
the (affine) subspace that allows those convergence rates,
which is the main observation of this note.

\begin{proposition}
\label{p:tik}
	Let $0 \leq \nu \leq 1$. Then
	\begin{align}
	\label{e:p:tik}
		N_\nu^{-1}
		\norm{x^\dag - x_0}{\nu:1}
		\leq
		\sup_{\alpha > 0}
		\alpha^{-\nu}
		\norm{ x^\dag - x_\alpha }{}
		\leq
		\norm{x^\dag - x_0}{\nu:1}
		.
	\end{align}
	Equalities hold for $\nu = 0$ and $\nu = 1$.
	If $\nu = 1$ then $\sup_{\alpha > 0}$ can be replaced by $\lim_{\alpha \searrow 0}$.
\end{proposition}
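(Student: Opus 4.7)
The plan is to reduce both sides of \eqref{e:p:tik} to integrals against the spectral measure $\mu_{x^\dag - x_0}$ and compare the multipliers pointwise. Combining \eqref{e:Sa} with the spectral calculus gives
\begin{align*}
\norm{x^\dag - x_\alpha}{}^2
= \int_{[0,\infty)} \frac{\alpha^2}{(\alpha+\lambda)^2}\,d\mu_{x^\dag-x_0}(\lambda),
\end{align*}
while \eqref{e:KtI} specialized to $\gamma = 1$ reads $|K_t^1(x^\dag-x_0)|^2 = \int_{[0,\infty)} t^2/(t^2+\lambda^2)\,d\mu_{x^\dag-x_0}(\lambda)$. The proposition then rests on choosing appropriate correspondences between $\alpha$ and $t$.

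For the upper bound in \eqref{e:p:tik}, the elementary inequality $(\alpha+\lambda)^2 \geq \alpha^2 + \lambda^2$ immediately yields $\norm{x^\dag - x_\alpha}{} \leq K_\alpha^1(x^\dag - x_0)$ pointwise in $\alpha > 0$; multiplying by $\alpha^{-\nu}$ and taking $\sup_{\alpha>0}$ bounds the middle quantity by $\norm{x^\dag - x_0}{\nu:1}$.

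For the lower bound I would first assume $0 < \nu < 1$ and apply \eqref{e:Ncomm} with $k = 1$ and $\theta := \nu$. After rearrangement, this is precisely the pointwise inequality
\begin{align*}
N_\nu^{-2}\,t^{-2\nu}\,\frac{t^2}{t^2+\lambda^2}
\leq
\alpha^{-2\nu}\,\frac{\alpha^2}{(\alpha+\lambda)^2}
\quad\text{for}\quad
\alpha = t\sqrt{(1-\nu)/\nu}.
\end{align*}
Integrating against $\mu_{x^\dag-x_0}$ gives $N_\nu^{-2} t^{-2\nu}|K_t^1(x^\dag-x_0)|^2 \leq \alpha^{-2\nu}\norm{x^\dag-x_\alpha}{}^2$, and taking $\sup_{t>0}$ on the left while bounding the right by $\sup_{\alpha>0}$ completes the bound.

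The boundary cases I would treat directly from the spectral representation. For $\nu = 0$, $\alpha^2/(\alpha+\lambda)^2 \nearrow 1$ as $\alpha \to \infty$, so monotone convergence gives $\sup_{\alpha>0}\norm{x^\dag - x_\alpha}{} = \norm{x^\dag - x_0}{} = \norm{x^\dag - x_0}{0:1}$, and $N_0 = 1$ yields equality throughout. For $\nu = 1$ the integrand $(\alpha+\lambda)^{-2}$ is monotone decreasing in $\alpha$, so $\sup_{\alpha>0}$ coincides with $\lim_{\alpha\searrow 0}$, and the limit equals $\int \lambda^{-2}d\mu_{x^\dag-x_0} = \norm{x^\dag-x_0}{1}^2 = \norm{x^\dag - x_0}{1:1}^2$ by \eqref{e:X01}. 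I do not expect any genuine obstacle; the essence is recognizing that the calibration $\alpha = t\sqrt{(1-\nu)/\nu}$ supplied by \eqref{e:Ncomm} is the one that matches the two multipliers up to the constant $N_\nu$.
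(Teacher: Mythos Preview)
Your argument is correct and mirrors the paper's proof of Proposition~\ref{p:tikk} specialized to $k=1$: the upper bound via $(\alpha+\lambda)^2 \geq \alpha^2+\lambda^2$ is the paper's estimate $1/(\alpha+\lambda)^{2k} \leq 1/(t^2+\lambda^{2k})$ with $t=\alpha$, the lower bound is exactly the paper's invocation of \eqref{e:Ncomm} with $\theta=\nu/k$, and the monotone convergence argument for $\nu=1$ is identical. The only difference is that you handle the endpoints $\nu=0,1$ separately (where the calibration $\alpha = t\sqrt{(1-\nu)/\nu}$ degenerates), which is a sensible precaution the paper leaves implicit.
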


The result is a special case of Proposition \ref{p:tikk} below,
and the proof is therefore omitted.
Several remarks are in order.

	
	Combining (\ref{e:ga-nu}a)--(\ref{e:ga-nu}b) and \eqref{e:p:tik},
	for $0 < \nu < 1$,
	we have
	that $\norm{x^\dag - x_\alpha}{} \leq C \alpha^{\nu}$ for all $\alpha > 0$
	if and only if
	$x^\dag \in x_0 + \IX_\nu$.
	In essence,
	this was already shown in \cite[Theorem 2.1]{Neubauer1997}.
	We emphasize, however,
	that (\ref{e:ga-nu}a) and \eqref{e:p:tik} yield
	the more precise upper bound
	\begin{align}
	\label{e:bc}
		\norm{x^\dag - x_\alpha}{} 
		\leq 
		\alpha^\nu \tfrac{1}{\sqrt{1 - \nu}} 
		\normiii{x^\dag - x_0}{\nu}
		\quad
		\forall \alpha > 0 
		.
	\end{align}
	In particular, 
	although the rate of convergence of Tikhonov regularization
	is at least $\nu$ whenever $x^\dag \in x_0 + \IX_\nu$,
	the constant in \eqref{e:bc} may deteriorate as $\nu \nearrow 1$
	compared to 
	the error \eqref{e:coerr}
	of the spectral cut-off regularization.
	This may be interpreted as a quantitative description of 
	the finite qualification of Tikhonov regularization,
	that is its inability to provide convergence larger than $\nu = 1$.
	Of course,
	by \eqref{e:p:tik},
	the rate of $\nu = 1$ does hold 
	if (and only if) $x^\dag \in x_0 + X_1$,
	but
	this condition is more restrictive 
	than $x^\dag \in x_0 + \IX_1$,
	cf.~%
	Proposition \ref{p:ga-nu}
	and
	Example \ref{x:IX}.
	
	%


	\REV{%
	From \eqref{e:p:tik}
	we see that
	Tikhonov regularization with 
	a parameter choice rule satisfying \eqref{e:qo}
	is order optimal
	on 
	$M_{\nu,\rho} := \{ x \in X : \norm{x}{\nu:1} \leq \rho \}$
	if $T$ has nonclosed range.
	Indeed, \eqref{e:qo} and \eqref{e:p:tik}
	imply
	the error estimate
	$
		\norm{x^\dagger - x_{\bar{\alpha}}^\delta }{}
		\lesssim
		\delta^{2\nu / (2\nu + 1)}
		\rho^{1 / (2\nu + 1)}
	$
	whenever
	$x^\dagger \in M_{\nu,\rho}$;
	on the other hand,
	\cite[Prop 3.15]{EnglHankeNeubauer1996}
	and comments there
	show that
	this estimate is optimal
	because
	$M_{\nu,\rho}$ contains
	the classical source set
	$\{ x \in X : \norm{x}{\nu} \leq \rho' \}$
	by (\ref{e:ga-nu}c).
	Now we can invoke
	\cite[Thm 2.3]{RausHamarik2007}
	to assert that
	any parameter choice rule 
	that is strongly quasioptimal
	in the sense of \cite[Def 2.2]{RausHamarik2007}
	will again give rise 
	to
	an order optimal method on $M_{\nu,\rho}$.
	}


	The final statement of Proposition \ref{p:tik}
	implies the saturation result:
	If $\norm{x^\dagger - x_\alpha}{} = o(\alpha)$
	as $\alpha \searrow 0$
	then $x^\dagger = x_0$.
	An analogous statement holds 
	in the noisy case 
	for any
	parameter choice strategy
	satisfying \eqref{e:qo}.
	

	An additional consequence 
	of \eqref{e:p:tik}
	is that,
	for $0 \leq \nu \leq \gamma$,
	\begin{align}
	\label{e:c:tikr}
		N_{\nu/\gamma}^{-1}
		\norm{ x^\dag - x_0 }{\nu : \gamma}
		\leq
		\sup_{\alpha > 0}
		\alpha^{-\nu}
		\norm{ x^\dag - x_{\alpha:\gamma} }{}
		\leq
		\norm{ x^\dag - x_0 }{\nu : \gamma}
	\end{align}
	where $x_{\alpha:\gamma}$ is given by
	\begin{align}
	\label{e:c:tikr:x}
		x_{\alpha:\gamma} 
		:= 
		\mathop{\mathrm{argmin}}_{x \in X} 
		\{
			\norm{ (T^* T)^{\gamma/2} (x^\dag - x) }{}^2
			+
			\alpha
			\norm{ x^\dag - x_0 }{}^2
		\}
		.
	\end{align}
	This is simply \eqref{e:p:tik}
	for the operator
	$(T^* T)^{\gamma/2}$ instead of $T$.
	Therefore,
	letting $x_0 = 0$,
	the mapping
	$x^\dag \mapsto \sup_{\alpha > 0} \alpha^{-\nu} \norm{x^\dag - x_{\alpha:\gamma}}{}$
	defines a norm on $X_{\nu:\gamma}$
	which 
	is equivalent to $\norm{\cdot}{\nu:\gamma}$
	uniformly in $0 \leq \nu \leq \gamma$.
	%


	The variational inequality \eqref{e:S}
	was used as a starting point in 
	\cite[Lemma 2(ii) \& Lemma 7]{AndreevElbauDeHoopLiuScherzer2015}
	for an elementary proof of
	convergence rates for Tikhonov regularization 
	without 
	invoking spectral theory.
	This is certainly of interest,
	but leads to the natural question 
	whether \eqref{e:S} itself
	can be established in particular cases 
	without spectral theory; 
	the characterization \eqref{e:p:svi}
	in terms of interpolation spaces
	is a step in that direction.
	That elementary proof, however,
	seems to be limited to rates $\nu \leq 1/2$ 
	in \eqref{e:anu},
	as it uses the variational inequality \eqref{e:S} 
	with $\gamma = 1/2$.

	
	As an example
	consider
	the following integral operator
	on
	square integrable functions
	$X := Y := L_2$
	of the unit interval $(-1, 1)$,
	\begin{align}
		\textstyle
		T : X \to Y,
		\quad
		(T x)(t)
		:=
		\int_{-1}^{t} x(s) ds
		-
		\frac{1}{2}
		\int_{-1}^1 
			\int_{-1}^\tau x(s) ds
		d\tau
		.
	\end{align}
	One can check that
	$u := T^* T x$
	solves
	the boundary value problem
	$-u'' = x$, $u|_{\pm1} = 0$.
	Hence, 
	$X_1$ from \eqref{e:Xga}
	is 
	the Sobolev space $H^2 \cap H_0^1$.
	%
	%
	Let now $y^\dagger(t) := |t| - 1$,
	so that \eqref{e:Tx=y}
	holds
	with $x^\dagger(s) := \mathop{\mathrm{sign}}(s)$.
	This means, 
	solving \eqref{e:Tx=y} with noisy data $y^\delta$
	amounts to taking the derivative of $y^\dagger$
	perturbed by $L_2$ noise.
	Set $\nu := 1/4$.
	The interpolation space
	$X_{\nu:1} = (X, X_1)_{1/4, \infty}$
	is the Besov space $\mathring{B}_{2, \infty}^{1/2}$
	\cite[Prop 1.25 and 1.30]{Guidetti1991}.
	In view of \cite[Def 1.12 of $\mathring{B}$]{Guidetti1991},
	to check that $x^\dagger \in X_{\nu:1}$,
	it suffices to check that
	$L_2$ step functions are in 
	$B_{2,\infty}^{1/2}(\IR)$,
	but
	this is immediate from
	the intrinsic definition 
	\cite[Def 1.1]{Guidetti1991}.
	On the other hand,
	\cite[Ch 1, Thm 11.7]{LionsMagenes1972}
	implies that
	$t \mapsto |x(t)|^2 / (t^2 - 1)$ is integrable
	for any $x \in X_{1/4}$
	(denoted by $H_{00}^{1/2}$ in \cite{LionsMagenes1972}),
	which is clearly not the case for $x^\dagger$.
	In summary,
	$x^\dagger \in X_{(1/4):1} \setminus X_{1/4}$,
	and 
	Proposition \ref{p:tik}
	implies 
	the  
	convergence rate $\nu = 1/4$,
	familiar from \cite[Example 5]{FlemmingHofmannMathe2011}.

\REV{%
	On a more general note,
	let $M$ be
	a connected smooth finite-dimensional Riemannian manifold
	with bounded geometry
	(complete, injectivity radius bounded away from zero, bounded covariant derivatives of the curvature tensor). 
	For instance, 
	the flat space $\IR^d$
	or
	any compact manifold without boundary
	has bounded geometry.
	On $M$,
	Sobolev $H^s$ and Besov $B_{2,\infty}^s$
	spaces can be defined 
	intrinsically through local means \cite[\S1.11.4]{Triebel1992},
	and by \cite[\S1.11.3]{Triebel1992}
	one has
	$(H^s, H^{s+k})_{\theta, \infty} = B_{2,\infty}^{s + \theta k}$.
	Thus, 
	for $T^* T$ with domain $X := H^s$ and range $X_1 = H^{s + 2m}$,
	$m > 0$,
	we have
	$X_{\nu:1} = B_{2,\infty}^{s + \nu 2 m}$,
	so that the convergence rate $\nu$ in \eqref{e:anu}
	is characterized in terms of Besov smoothness of $x^\dagger$.
	This covers a wide class of operators $T$,
	for instance 
	bijective elliptic pseudo-differential operators
	$H^s \to H^{s+m}$
	of order $-m$
	such as
	$(I - \Delta)^{-m/2}$.
}
	

\subsection{Stationary iterated Tikhonov regularization}
\label{s:ip:IT}

Fixing $\alpha > 0$,
for each integer $k \geq 1$,
define $x_{\alpha, k}$ as the minimizer of
the Tikhonov functional
$x \mapsto J_\alpha(x; x_{\alpha,k-1})$,
where $J_\alpha$ is given by \eqref{e:J}
and
$x_{\alpha,0} := x_0$.
Thus, each iterate is used as a prior 
for the next one
with the same (stationary) choice of 
the regularization parameter $\alpha > 0$.
Then 
\begin{align}
\label{e:errTk}
	x^\dagger - x_{\alpha,k} = S_\alpha^k (x^\dagger - x_0)
	\quad
	\forall k \geq 1,
\end{align}
with $S_\alpha$ from \eqref{e:Sa}.
%
%
%
%
%
Since $S_\alpha$ commutes with powers of $(T^* T)$,
we obtain from \eqref{e:a01}
the ``shift estimate''
$
	\norm{ S_\alpha (x^\dag - x_\alpha) }{k - \ell} 
	\leq 
	\alpha \norm{x^\dag - x_0}{k - \ell + 1}
$,
and repeating this argument for each iteration $\ell = 1, 2, \ldots, k$,
of the $k$-fold Tikhonov regularization
then
$ 
	\norm{ S_\alpha (x^\dag - x_{\alpha, k}) }{} 
	\leq
	\alpha^k
	\norm{x^\dag - x_0}{k}
	.
$ 
The operator interpolation inequality 
readily yields
the convergence rate
$\alpha^{\nu}$ 
for $x^\dag \in x_0 + X_{\nu:k}$ and $0 \leq \nu \leq k$.
The following extension of 
Proposition \ref{p:tik}
is more precise.

\begin{proposition}
\label{p:tikk}
	Let $k \geq 1$ be an integer, let $0 \leq \nu \leq k$ be real.
	Then
	\begin{align}
	\label{e:p:tikk}
		N_{\nu / k}^{1 - 2 k}
		\norm{ x^\dag - x_0 }{\nu : k}
		\leq
		\sup_{\alpha > 0}
		\alpha^{-\nu}
		\norm{ x^\dag - x_{\alpha,k} }{}
		\leq
		\norm{ x^\dag - x_0 }{\nu : k}
		.
	\end{align}
	%
	%
	If $\nu = k$ then $\sup_{\alpha > 0}$ can be replaced by $\lim_{\alpha \searrow 0}$.
\end{proposition}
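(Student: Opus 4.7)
The plan is to reduce both inequalities in \eqref{e:p:tikk} to pointwise-in-$\lambda$ estimates between two spectral kernels. By \eqref{e:errTk} and the spectral calculus for $T^*T$,
\[
\norm{x^\dagger - x_{\alpha,k}}{}^2 = \int_{[0,\infty)}\left(\frac{\alpha}{\alpha+\lambda}\right)^{2k} d\mu_{x^\dagger - x_0}(\lambda),
\]
while \eqref{e:KtI} represents $|K_t^k(x^\dagger - x_0)|^2$ as the integral of $t^2/(t^2+\lambda^{2k})$ against the same measure $\mu_{x^\dagger - x_0}$. Both sides of \eqref{e:p:tikk} are thereby expressed in terms of these two kernels, and any pointwise comparison will transfer to a comparison of the integrated and supremized quantities.

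For the upper bound I would make the distinguished choice $t := \alpha^k$, which reduces the desired pointwise inequality
\[
\alpha^{-2\nu}\left(\frac{\alpha}{\alpha+\lambda}\right)^{2k} \leq t^{-2\nu/k}\frac{t^2}{t^2+\lambda^{2k}}
\]
to the elementary statement $\alpha^{2k}+\lambda^{2k} \leq (\alpha+\lambda)^{2k}$, immediate from the binomial expansion since all mixed terms are nonnegative. Integrating against $\mu_{x^\dagger - x_0}$ and then bounding $t^{-2\nu/k}|K_t^k(x^\dagger - x_0)|^2$ by its supremum yields $\alpha^{-\nu}\norm{x^\dagger - x_{\alpha,k}}{} \leq \norm{x^\dagger - x_0}{\nu:k}$ for every $\alpha > 0$ and every $0 \leq \nu \leq k$, which is the right inequality in \eqref{e:p:tikk}.

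For the lower bound the key tool is already in hand: inequality \eqref{e:Ncomm} with $\theta := \nu/k$ and the specified value $\alpha = t^{1/k}((1-\theta)/\theta)^{1-1/(2k)}$, integrated against $\mu_{x^\dagger - x_0}$, gives
\[
N_{\nu/k}^{2(1-2k)}\, t^{-2\nu/k}|K_t^k(x^\dagger - x_0)|^2 \leq \alpha^{-2\nu}\norm{x^\dagger - x_{\alpha,k}}{}^2.
\]
For $0 < \nu < k$ the map $t \mapsto \alpha(t)$ is a continuous bijection of $(0,\infty)$ onto itself, so taking $\sup_{t>0}$ on the left and bounding the right by $\sup_{\alpha>0}$ delivers the left inequality in \eqref{e:p:tikk}; the endpoints $\nu = 0$ and $\nu = k$ reduce, via \eqref{e:X01}, to the identities $\norm{\cdot}{0:k}=\norm{\cdot}{}$ and $\norm{\cdot}{k:k}=\norm{\cdot}{k}$ with direct spectral computations. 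Finally, when $\nu = k$ the integrand $(\alpha+\lambda)^{-2k} = \alpha^{-2k}(\alpha/(\alpha+\lambda))^{2k}$ increases monotonically as $\alpha \searrow 0$, so monotone convergence upgrades the supremum to a limit equal to $\int \lambda^{-2k}\,d\mu_{x^\dagger - x_0} = \norm{x^\dagger - x_0}{k}^2$. The only real bit of ingenuity required is guessing the matching value $t = \alpha^k$ for the upper bound; everything else is handed to us by the preparatory estimates established earlier in the paper.
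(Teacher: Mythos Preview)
Your proposal is correct and follows essentially the same route as the paper: the spectral representation \eqref{e:p:tikk:a}, the substitution $t=\alpha^k$ together with $(\alpha+\lambda)^{2k}\geq \alpha^{2k}+\lambda^{2k}$ for the upper bound, inequality \eqref{e:Ncomm} with $\theta=\nu/k$ for the lower bound, and monotone convergence for the case $\nu=k$. Your explicit treatment of the endpoints $\nu\in\{0,k\}$ via \eqref{e:X01} is a welcome addition that the paper leaves implicit, and your remark that $t\mapsto\alpha(t)$ is a bijection is in fact not even needed---bounding the right-hand side by the supremum over all $\alpha>0$ before taking $\sup_{t>0}$ on the left already suffices.
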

\begin{proof}
	From \eqref{e:errTk} and \eqref{e:Sa},
	\begin{align}
	\label{e:p:tikk:a}
		\alpha^{-2 \nu}
		\norm{ x^\dag - x_{\alpha,k} }{}^2
	 	=
	 	\alpha^{-2 \nu}
	 	\int_{[0, \infty)}
	 	\left( \frac{\alpha}{\alpha + \lambda} \right)^{2 k}
	 	d \mu_{x^\dag - x_0}(\lambda)
		.
	\end{align}
	The first inequality in \eqref{e:p:tikk}
	is therefore a consequence of \eqref{e:Ncomm}
	with $\theta = \nu / k$,
	and the representation \eqref{e:KtI} of the interpolation norm.
	The second inequality in \eqref{e:p:tikk}
	follows by identifying $\alpha = t^{1/k}$,
	estimating
	$1 / (\alpha + \lambda)^{2 k} \leq 1 / ( t^2 + \lambda^{2 k} )$,
	and
	invoking the representation \eqref{e:KtI}.
	Finally,
	if $\nu = k$,
	then
	$\sup_{\alpha > 0}$\eqref{e:p:tikk:a} = 
	$\lim_{\alpha \searrow 0}$\eqref{e:p:tikk:a}
	by
	the Lebesgue monotone convergence theorem.
\end{proof}


%

Remarks analogous to those following Proposition \ref{p:tik} apply.
For instance,
the $k$-fold Tikhonov regularization
saturates:
If $\norm{x^\dag - x_{\alpha, k}}{} = o(\alpha^k)$ as $\alpha \searrow 0$
then $x^\dag = x_0$.

%

\subsection{Landweber iteration}
\label{s:ip:L}

Fix $\sigma > 0$ with
\begin{align}
\label{e:tau}
	0 < \sigma \norm{T^* T}{} \leq 1
	.
\end{align}
The Landweber iterates $x_k$ are defined by
\begin{align}
\label{e:LW}
	x_k :=
	x_{k-1} + \sigma T^* (y^\dag - T x_{k-1}),
	\quad
	k \in \IN,
	\quad\text{with a given}\quad
	x_0 \in X,
\end{align}
and $x_k^\delta$ by the same iteration with $y^\dag$ replaced by $y^\delta$
in the noisy case.
%
%
%
%
%
%
One motivation for this iteration
is that
$x^\dag$ is a fixed point.
%
By induction
one finds
\begin{align}
	x^\dag - x_k
	=
	(I - \sigma T^* T)
	(x^\dag - x_{k-1})
	=
	(I - \sigma T^* T)^k 
	(x^\dag - x_0)
	\quad
	\forall
	k \in \IN
	,
\end{align}
and similarly
the residual representation
\begin{align}
\label{e:dade}
	y^\dag - T x_k
	=
	(I - T T^*)^k (y^\dag - T x_0)
	\quad\text{and}\quad
	y^\delta - T x_k^\delta
	=
	(I - T T^*)^k (y^\delta - T x_0)
	.
\end{align}
The condition \eqref{e:tau} 
therefore guarantees
nondivergence of the iterates.

For the noisy case, 
an error splitting analogous to \eqref{e:xad}
is true \cite[Lemma 6.2]{EnglHankeNeubauer1996}:
\begin{align}
\label{e:LW:es}
	\norm{ x^\dag - x_k^\delta }{}
	\leq
	\norm{ x^\dag - x_k }{}
	+
	\delta
	\sqrt{k}
	\quad
	\forall k \geq 0
	,
\end{align}
thus one often ``morally'' identifies 
$k$ with $1/\alpha$.
We call a mapping 
\begin{align}
\label{e:sr}
	\bar{k}
	:
	(\delta, y^\delta, \ldots)
	\mapsto
	\bar{k}
	(\delta, y^\delta, \ldots)
\end{align}
a stopping rule,
and
in analogy to \eqref{e:pcs}--\eqref{e:qo}
we shall suppose that
it satisfies
\begin{align}
\label{e:qok}
	\delta^{-2\nu}
	\sup_{{{\eqref{e:ydel}}}}
	\norm{x^\dag - x_{\bar{k}(\delta, y^\delta, \ldots)}^\delta}{}^{2\nu + 1}
	\sim
	\sup_{k \geq 0}
	(1 + k/\nu)^{\nu}
	\norm{x^\dag - x_k}{}
	\quad
	\forall
	\delta > 0
	,
\end{align}
where the hidden constants 
do not depend on $\delta$, $x^\dag$, or $x_0$,
but may depend on $\nu \geq 0$.
The factor $(1 + k/\nu)$ 
is motivated by \eqref{e:LWE}--\eqref{e:p:LW}.
The stopping rule $\bar{k}$ may be based 
on the knowledge of some of the iterates $x_k$,
for example 
it may be the smallest $k \geq 0$ for which
the 
discrepancy principle
\begin{align}
\label{e:MDPk}
	\norm{
		y^\delta - T x_k^\delta
	}{}
	\leq
	\delta \tau
\end{align}
is satisfied with some fixed threshold $\tau > 1$,
which in particular is not allowed to depend on $x^\dag$.
We will comment on this stopping rule at the end of this section.
%
%

%
We shall show that 
convergence rates of 
the Landweber iteration \eqref{e:LW}
in the noise-free case
characterize the spaces $\IX_\nu$.
Before formalizing this, we need a lemma.

\begin{lemma}
\label{l:c2}
	The constant 
	\begin{align}
	\label{e:c2}
		c_2
		:=
		\sup_{a > 0}
		\sup_{s > 0}
		\sqrt{ I(s, a) },
		\quad
		I(s, a)
		:=
		\left(
			\frac{ s + a }{ a }
		\right)^a
		\int_0^1 s (1 - t)^{s-1} t^{a} dt,
	\end{align}
	is finite
	with
	$c_2 \approx 1.135$.
\end{lemma}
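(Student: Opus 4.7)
The plan is to first reduce the inner integral to a Gamma-function quotient, then establish monotonicity of $s \mapsto I(s,a)$ for each fixed $a$ by a Laplace-transform sign argument, and finally reduce the supremum to a one-dimensional maximization.

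First, I would use the substitution $u = 1 - t$ to recognise the inner integral as a Beta function,
\[
	\int_0^1 s(1-t)^{s-1} t^a \, dt
	= s B(s, a+1)
	= \frac{\Gamma(s+1)\Gamma(a+1)}{\Gamma(s+a+1)},
\]
so that
\[
	I(s,a)
	= \Bigl(1 + \tfrac{s}{a}\Bigr)^a \cdot \frac{\Gamma(s+1)\Gamma(a+1)}{\Gamma(s+a+1)},
\]
a smooth function of $(s,a) \in (0,\infty)^2$ with $\lim_{s \to 0^+} I(s,a) = 1$ and, by Stirling's $\Gamma(s+1)/\Gamma(s+a+1) \sim s^{-a}$, $\lim_{s \to \infty} I(s,a) = g(a) := \Gamma(a+1)/a^a$.

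Next I would prove that $s \mapsto I(s,a)$ is strictly increasing on $(0,\infty)$ for $0 < a < 1$, identically equal to $1$ for $a = 1$, and strictly decreasing for $a > 1$. Combining $a/(s+a) = \int_0^\infty a e^{-(s+a)t} dt$ with the Frullani representation $\psi(s+1+a) - \psi(s+1) = \int_0^\infty e^{-(s+1)t}(1 - e^{-at})/(1 - e^{-t}) \, dt$ gives
\[
	\partial_s \log I(s,a)
	= \int_0^\infty e^{-st} \frac{N(t)}{1 - e^{-t}} \, dt,
	\quad
	N(t) := a e^{-at}(1 - e^{-t}) - e^{-t}(1 - e^{-at}).
\]
A short computation with $\tilde N(t) := e^{at} N(t) = a + (1 - a)e^{-t} - e^{(a-1)t}$ yields $\tilde N(0) = 0$ and $\tilde N'(t) = (1-a)\bigl[e^{(a-1)t} - e^{-t}\bigr]$. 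The bracket is positive for every $t > 0$ and $a > 0$ (the two exponents differ by $at$), so $\tilde N'$ has the sign of $(1-a)$ throughout $(0,\infty)$; since $\tilde N(0) = 0$, the same sign is inherited by $\tilde N(t)$ for $t > 0$, by $N(t)$, and by the Laplace integral above. The case $a = 1$ gives $N \equiv 0$ and is consistent with the telescoping identity $I(s,1) \equiv 1$.

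Combined with the boundary limits, this yields $\sup_{s > 0} I(s,a) = \max(1, g(a))$. The function $g$ is continuous on $(0, \infty)$ with $g(0^+) = g(1) = 1$ and $g(\infty) = 0$ (Stirling again), so the overall supremum $\sup_{a > 0} g(a)$ is finite and attained at a critical point $a^* \in (0,1)$ characterised by $\psi(a^* + 1) = 1 + \log a^*$. A numerical evaluation gives $a^* \approx 0.297$ and $g(a^*) \approx 1.288$, whence $c_2 = \sqrt{\max_{a > 0} g(a)} \approx 1.135$. The one non-routine step is the sign analysis in the second paragraph above; the remainder is standard Gamma-function algebra together with a one-variable optimisation.
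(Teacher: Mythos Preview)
Your argument is correct and follows exactly the same route as the paper: identify the Beta function, establish that $s\mapsto I(s,a)$ is increasing for $a<1$ and decreasing for $a>1$, pass to the Stirling limit $g(a)=a^{-a}\Gamma(a+1)$, and maximise over $a\in(0,1)$. The paper merely asserts the monotonicity in $s$, whereas your Laplace/digamma sign computation actually proves it, so you have filled a gap; your numerical $a^*\approx 0.297$ differs slightly from the paper's $a^*\approx 0.3164$ (the latter appears to be the correct root of $\psi(a+1)=1+\log a$), but since $g$ is flat near its maximum both yield $g(a^*)\approx 1.288$ and hence the same $c_2\approx 1.135$.
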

\begin{proof}
	First, $I(s, a)$ is well-defined
	for all real $s > 0$ and $a > 0$.
	The integral, 
	known as the beta function \cite[\S6.2]{AbramowitzStegun1972},
	evaluates to 
	$\Gamma(s+1) \Gamma(a+1) / \Gamma(s+a+1)$.
	One has $\lim_{s \searrow 0} I(s, a) = 1$ for all $a > 0$.
	Moreover,
	the function
	$s \mapsto I(s, a)$
	is increasing for $0 \leq a \leq 1$
	and
	decreasing for $1 \leq a$.
	So we need to consider 
	the pointwise limit $I(s, a)$ as $s \to \infty$
	for $0 \leq a \leq 1$.
	Stirling's formula \cite[\S6.1.38]{AbramowitzStegun1972} 
	for $\Gamma(\cdot)$ yields
	\begin{align}
		\label{e:soo}
		\bar{I}(a) := \lim_{s \to \infty} I(s, a) = a^{-a} \Gamma(a + 1)
		\quad
		\forall a > 0
		,
	\end{align}
	which clearly is bounded in $0 \leq a \leq 1$.
	We find
	numerically
	that
	$\bar{I}$ is maximized at $a^* \approx 0.3164$
	with $\bar{I}(a^*) \approx 1.288$.
	The constant $c_2$ is the square root of the latter.
\end{proof}

We can now state 
the
announced 
rate characterization for 
the Landweber iteration.

\begin{proposition}
\label{p:LW}
	Let $\{ x_k \}_{k \geq 0}$ be the Landweber iterates 
	generated by \eqref{e:LW}
	with 
	step size $\sigma > 0$ as in \eqref{e:tau}.
	Let $\nu > 0$ and $r \geq 0$.
	Set
	\begin{align}
	\label{e:LWE}
		\Delta_{\nu}^r(x^\dag - x_0)
		:=
		\sup_{k \geq 0}
		\varepsilon_{k}^{-(r + \nu)}
		\norm{ (T^* T)^r (x^\dag - x_k) }{}
		\quad\text{where}\quad
		\varepsilon_{k}
		:=
		\frac{1}{\sigma}
		\frac{r + \nu}{k + r + \nu}
		.
	\end{align}

	Then, 
	%
	\begin{align}
	\label{e:p:LW}
		c_1
		\sqrt{ \tfrac{\nu}{r + \nu} }
		\normiii{x^\dag - x_0}{\nu}
	\leq
		\Delta_{\nu}^r(x^\dag - x_0)
	\leq
		c_2
		\normiii{x^\dag - x_0}{\nu}
		,
	\end{align}
	where $c_2$ is from \eqref{e:c2} and
	\begin{align}
	\label{e:p:LW:c1}
		c_1
		:=
		\inf_{k \geq 0}
		\left\{
			(1 - \sigma \varepsilon_{k})^{k}
			\left[
				\varepsilon_{k+1} / \varepsilon_{k}
			\right]^{r + \nu}
		\right\}
		\geq
		( \sigma \varepsilon_1 / e)^{r + \nu}
		.
	\end{align}
\end{proposition}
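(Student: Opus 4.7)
\textbf{My plan} is to pass to the scalar spectral representation
\begin{equation*}
  \norm{(T^*T)^r(x^\dagger - x_k)}{}^2 = \int_{[0,\infty)} \lambda^{2r}(1-\sigma\lambda)^{2k}\, d\mu(\lambda)
\end{equation*}
with $\mu := \mu_{x^\dagger - x_0}$ (valid since $\mu$ is supported in $[0,\norm{T^*T}{}] \subset [0,1/\sigma]$ by \eqref{e:tau}); then obtain the upper bound by Stieltjes integration by parts against the distribution function $g(\lambda) := \mu([0,\lambda))$, and the lower bound by isolating the integrand on a subinterval $[0,t)$ where $t$ nearly realizes the supremum in $\normiii{\cdot}{\nu}$.

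For the \emph{upper bound}, integration by parts gives, since $g(0) = 0$ (by injectivity of $T$) and either $\lambda^{2r}$ or $(1-\sigma\lambda)^{2k}$ vanishes at the endpoints of $[0,1/\sigma]$,
\begin{equation*}
  \int \lambda^{2r}(1-\sigma\lambda)^{2k} d\mu = -2r\!\int_0^{1/\sigma}\!\!\lambda^{2r-1}(1-\sigma\lambda)^{2k}g(\lambda)\,d\lambda + 2k\sigma\!\int_0^{1/\sigma}\!\!\lambda^{2r}(1-\sigma\lambda)^{2k-1}g(\lambda)\,d\lambda.
\end{equation*}
Dropping the nonpositive first term, applying $g(\lambda) \leq \lambda^{2\nu}\normiii{\mu}{\nu}^2$, and substituting $u = \sigma\lambda$ in the remainder identifies a Beta function $B(2r+2\nu+1,2k)$. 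A direct rearrangement recognizes
$\int\lambda^{2r}(1-\sigma\lambda)^{2k}d\mu \leq \varepsilon_k^{2(r+\nu)}\,I(2k,2(r+\nu))\,\normiii{\mu}{\nu}^2$
with $I$ as in Lemma \ref{l:c2}, whence the $c_2$-bound in \eqref{e:p:LW}.

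For the \emph{lower bound}, given $\rho^2 \in (\tfrac{r}{r+\nu}\normiii{\mu}{\nu}^2,\,\normiii{\mu}{\nu}^2)$, I pick $t \in (0,1/\sigma]$ with $\mu([0,t)) > \rho^2 t^{2\nu}$ and then $k = k(t) \geq 0$ with $t \in (\varepsilon_{k+1},\varepsilon_k]$, which exists since $\varepsilon_k$ decreases strictly from $\varepsilon_0 = 1/\sigma$ to $0$. Monotonicity of $(1-\sigma\lambda)^{2k}$ in $\lambda$ and the dual layer-cake identity
\begin{equation*}
  \int_{[0,t)} \lambda^{2r} d\mu = t^{2r}\mu([0,t)) - 2r\int_0^t \lambda^{2r-1} g(\lambda)\, d\lambda,
\end{equation*}
combined again with $g(\lambda) \leq \lambda^{2\nu}\normiii{\mu}{\nu}^2$, yield $\int_{[0,t)}\lambda^{2r}(1-\sigma\lambda)^{2k}d\mu \geq (1-\sigma t)^{2k}\,t^{2r+2\nu}[\rho^2 - \tfrac{r}{r+\nu}\normiii{\mu}{\nu}^2]$. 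Dividing by $\varepsilon_k^{2(r+\nu)}$ and bounding $(t/\varepsilon_k)^{2(r+\nu)} \geq (\varepsilon_{k+1}/\varepsilon_k)^{2(r+\nu)}$ and $(1-\sigma t)^{2k} \geq (1-\sigma\varepsilon_k)^{2k}$ extracts a prefactor at least $c_1^2$; letting $\rho^2 \nearrow \normiii{\mu}{\nu}^2$ turns the bracket into $\tfrac{\nu}{r+\nu}\normiii{\mu}{\nu}^2$ and delivers the lower estimate in \eqref{e:p:LW}.

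The explicit bound \eqref{e:p:LW:c1} then follows from $(1-\sigma\varepsilon_k)^k = (1+(r+\nu)/k)^{-k} \geq e^{-(r+\nu)}$ (using $(1+a/x)^x \leq e^a$, with the convention $0^0 = 1$ at $k = 0$) together with the fact that $k \mapsto \varepsilon_{k+1}/\varepsilon_k = (k+r+\nu)/(k+1+r+\nu)$ is nondecreasing with minimum value $\sigma\varepsilon_1$ at $k = 0$. The main obstacle I anticipate is the bookkeeping in the lower bound — cleanly matching the discrete geometric factors from the choice of $k(t)$ with the infimum defining $c_1$, and extracting the precise factor $\nu/(r+\nu)$ from the limit $\rho \to \normiii{\mu}{\nu}$; identifying the upper-bound integral with Lemma \ref{l:c2}'s $I(s,a)$ is then essentially algebraic.
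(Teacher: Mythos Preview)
Your upper bound is the paper's argument almost verbatim: integrate by parts against $g(\lambda)=\mu([0,\lambda))$, drop the $-2r$ term, insert $g(\lambda)\le\lambda^{2\nu}\normiii{\mu}{\nu}^2$, and recognize the Beta integral as $I(2k,2(r+\nu))$ from Lemma~\ref{l:c2}. One small gap: your claim that ``either $\lambda^{2r}$ or $(1-\sigma\lambda)^{2k}$ vanishes at the endpoints'' fails at the right endpoint when $k=0$, so the boundary term $(1/\sigma)^{2r}g(1/\sigma)$ survives and your displayed identity is false there; the paper handles $k=0$ separately via the trivial bound $\varepsilon_0^{-2(r+\nu)}\int\lambda^{2r}d\mu\le\mu([0,1/\sigma])\le\normiii{\mu}{\nu}^2$, and you should too.

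Your lower bound is correct but takes a genuinely different route. The paper first proves the intermediate estimate $\normiii{\mu^{2r}}{r+\nu}\le c_1^{-1}\Delta$ (by fixing $\Lambda$, choosing $k$ with $\varepsilon_{k+1}<\Lambda\le\varepsilon_k$, and bounding $\mu^{2r}([0,\Lambda])$ by $(1-\varepsilon_k)^{-2k}\varepsilon_k^{2(r+\nu)}\Delta^2$), and then invokes Lemma~\ref{l:mu}, estimate \eqref{e:Tail1}, to pass from $\normiii{\mu^{2r}}{r+\nu}$ back to $\normiii{\mu}{\nu}$, which is where the factor $\sqrt{\nu/(r+\nu)}$ enters. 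You instead pick a near-extremal $t$ for $\normiii{\mu}{\nu}$ directly and use the layer-cake identity for $\int_{[0,t)}\lambda^{2r}d\mu$ with the same bound $g(\lambda)\le\lambda^{2\nu}\normiii{\mu}{\nu}^2$ to obtain the factor $\nu/(r+\nu)$ in one stroke, then let $\rho\nearrow\normiii{\mu}{\nu}$. Your argument is more self-contained (it does not need Lemma~\ref{l:mu}) and makes the origin of $\nu/(r+\nu)$ transparent; the paper's detour through $\mu^{2r}$ has the advantage of isolating the reusable inequality $\normiii{\mu^{2r}}{r+\nu}\le c_1^{-1}\Delta$. Your derivation of the explicit bound $c_1\ge(\sigma\varepsilon_1/e)^{r+\nu}$ matches the paper's statement.
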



\begin{proof}
	Fix $x^\dag \in X$.
	We abbreviate
	$\mu := \mu_{x^\dag - x_0}$ 
	and
	$\Delta := \Delta_{\nu}^r(x^\dag - x_0)$.
	The quantity $\varepsilon$ has been defined
	such as
	to satisfy
	\begin{align} 
		\varepsilon_{k}
		=
		\argmax_{\lambda \geq 0}
		f(\lambda)
		\quad\text{with}\quad
		f(\lambda) := \lambda^{2(r + \nu)} (1 - \sigma \lambda)^{2k}
		\quad
		\forall k \geq 0
		.
	\end{align}
	To enhance readability we will assume 
	for the remainder of the proof
	that $x_0 = 0$ and $\sigma = 1$.

	For the first inequality of \eqref{e:p:LW},
	define $\mu^{2r}(A) := \int_A \lambda^{2 r} d \mu(\lambda)$
	on Borel subsets $A \subset [0, \infty)$.
	Given $0 < \Lambda \leq \norm{T^* T}{}$,
	determine the integer $k \geq 0$ 
	by
	$\varepsilon_{k+1} < \Lambda \leq \varepsilon_{k}$.
	Then
	\begin{align}
		\mu^{2 r}([0, \Lambda])
		\leq
		\mu^{2 r}([0, \varepsilon_{k}])
	& \leq
		(1 - \varepsilon_{k})^{-2 k}
		\int_{[0, \varepsilon_{k}]}
		\lambda^{2 r}
		(1 - \lambda)^{2 k}
		d\mu(\lambda)
	\\
	& \leq
		(1 - \varepsilon_{k})^{-2 k}
		\varepsilon_{k}^{2 (r + \nu)}
		\Delta^2
		\quad
		\text{(by definition of $\Delta$)}
	\\
	& \leq
		(1 - \varepsilon_{k})^{-2 k}
		\left[
			\varepsilon_{k} / \varepsilon_{k+1}
		\right]^{2 (r + \nu)}
		\Lambda^{2(r + \nu)}
		\Delta^2
	\\
	&
	\leq
		c_1^{-2} 
		\Lambda^{2(r + \nu)}
		\Delta^2
		.
	\end{align}
	%
	Since $\Lambda > 0$ was arbitrary,
	we obtain the bound
	\begin{align}
	\label{e:p:LW:mu2r}
		\normiii{\mu^{2 r}}{r + \nu}
	\leq 
		c_1^{-1}
		\Delta
		.
	\end{align}
	The first inequality of \eqref{e:p:LW}
	now
	follows from:
	\begin{align}
		\normiii{\mu}{\nu}^2
	& =
		\sup_{t > 0}
		t^{-2 \nu}
		\int_{[0, t)} 
		\lambda^{-2 r}
		d\mu^{2 r}(\lambda)
	\stackrel{\eqref{e:Tail1}}{\leq}
		\tfrac{r + \nu}{\nu}
		\normiii{\mu^{2 r}}{r + \nu}^2
	\stackrel{\eqref{e:p:LW:mu2r}}{\leq}
		\tfrac{r + \nu}{\nu}
		c_1^{-2}
		\Delta^2
		.
	\end{align}

	We now prove the second inequality in \eqref{e:p:LW}.
	In view of \eqref{e:tau},
	the measure $\mu$ is supported on $[0, 1/\sigma]$,
	and recall that we have assumed $\sigma = 1$.
	So consider
	\begin{align}
		\varepsilon_{k}^{-2(r + \nu)}
		\norm{ (T^* T)^r (x^\dag - x_k) }{}^2
	& =
		\varepsilon_k^{-2(r + \nu)}
		\int_{[0, 1]} \lambda^{2 r} (1 - \lambda)^{2 k } d\mu(\lambda)
		.
	\intertext{%
		In the case $k = 0$,
		this is majorized by $\mu([0, 1]) \leq \normiii{\mu}{\nu}^2$.
		Otherwise,
		defining $I(\lambda) := \mu([0, \lambda))$ 
		and integrating by parts as in 
		the proof of Lemma \ref{l:mu},
		we estimate
	}
	& =
		\varepsilon_k^{-2(r + \nu)}
		\int_{[0, 1]} (-(\lambda^{2 r} (1 - \lambda)^{2 k})' I(\lambda) d\lambda
	\\
	& \leq
		\varepsilon_k^{-2(r + \nu)}
		\int_{[0, 1]} 2 k \lambda^{2 r} (1 - \lambda)^{2 k - 1} I(\lambda) d\lambda
		.
	\end{align}
	Estimating $I(\lambda) \leq \lambda^{2 \nu} \normiii{\mu}{\nu}^2$,
	and employing Lemma \ref{l:c2}
	with $s := 2k$ and $a := 2 (r + \nu)$,
	the second inequality in \eqref{e:p:LW} is obtained.
	This completes the proof of \eqref{e:p:LW}.
\end{proof}

Unfortunately, the gap between our ``constants'' in \eqref{e:p:LW}
is not robust in $\nu$.
For example, if $r = 0$,
we have
$c_2 / c_1 \sim e^{\nu}$ as $\nu \to \infty$.
%
%
Qualitatively though,
\eqref{e:p:LW} means that
it is necessary and sufficient 
for
the asymptotic convergence rate $\nu$
that the data be in $\IX_\nu$,
which explains the notoriously slow convergence
of the Landweber iteration.
In the limit $\nu \to \infty$,
the upper estimate in \eqref{e:p:LW}
yields the asymptotics
\begin{align}
	\norm{ (T^* T)^r (x^\dag - x_k) }{}
	\leq
	c_2
	e^{-k}
	(1 + \tfrac12 k^2 \nu^{-1} + \mathcal{O}(\nu^{-2}))
	\normiii{ x^\dag - x_0 }{\nu}
	\quad\text{as}\quad
	\nu \to \infty
	,
\end{align}
for any fixed iteration $k \geq 0$.
This bound indicates that, for a fixed iteration $k$,
only a limited amount of smoothness can be exploited
(cf.~\cite[Section 3, Example 3]{Mathe2004}).

From \cite[Theorem 1 and Corollary 1]{FlemmingHofmannMathe2011},
an estimate similar to \eqref{e:p:LW} with $r = 0$
can be obtained
if we assume
$d(r) \sim r^{-\nu/(1 - \nu)}$ for sufficiently large $r > 0$
for the distance function
(see Section \ref{s:i:dr}).

We believe it is natural for $\normiii{\cdot}{\nu}$
to appear in \eqref{e:p:LW}
rather than the interpolation norm $\norm{\cdot}{\nu : \gamma}$.
The following example illustrates this in 
the limiting case $\nu = \gamma$.
As in the $k$-fold Tikhonov regularization,
however,
it might be possible and meaningful
to relate the error of the $k$-th iterate 
to the $\norm{\cdot}{\nu : k}$ norm of the data.

\begin{example}
\label{x:IX:LW}
	Let $T$ and $x^\dag$ be as in Example \ref{x:IX}.
	Recall that $\norm{x^\dag}{1} = \infty$,
	while $\normiii{x^\dag}{1} \approx 1.096$.
	We will estimate
	$\Delta_{\nu}^{0}(x^\dag)$ for $\nu := 1$.
	Since $\norm{T}{} = 1$, we set $\sigma := 1$ to satisfy \eqref{e:tau}.
	Then, for any integer $k \geq 0$,
	\begin{align}
	\label{e:xLW}
		\varepsilon_k^{-2}
		\norm{ x^\dag - x_k }{}^2
		=
		\varepsilon_k^{-2}
		\int_{[0, \infty)}
		(1 - \lambda)^{2 k}
		d \mu_{x^\dag}
		=
		(k + 1)^2
		\sum_{n \geq 1}
		n^{-3}
		(1 - n^{-1})^{2 k}
		.
	\end{align}
	Splitting the sum at $n = k$,
	and exploiting 
	in the finite subsum
	the fact that $t \mapsto t^{-3} (1 - t^{-1})^{2k}$
	peaks at $t = 1 + \tfrac23 k$
	we have
	\begin{align}
		\sum_{n \geq 1} (\cdots)
		\leq
		\sum_{n \leq k} (1 + \tfrac23 k)^{-3}
		+
		\sum_{n > k} n^{-3}
		\leq
		C
		(k + 1)^{-2}
		\quad
		\forall k \geq 1
	\end{align}
	for some $C \geq 0$ independent of $k$,
	meaning that \eqref{e:xLW}
	is bounded uniformly in $k \geq 0$.
	We find numerically that 
	$|\Delta_{\nu}^{0}(x^\dag)|^2 = \eqref{e:xLW}|_{k = 0} \approx (1.096)^2$,
	$\eqref{e:xLW}|_{k = 1} \approx (0.5453)^2$,
	$\eqref{e:xLW}|_{k = 2} \approx (0.5475)^2$,
	and
	$\eqref{e:xLW}$
	is decreasing for $k \geq 2$
	with 
	$\lim_{k \to \infty} \eqref{e:xLW} = (1/2)^2$.
	This limit can also be verified using \eqref{e:soo}.
	The infimum in $c_1$ is achieved at $k = 2$,
	so $c_1 = 1/3$.
	Thus \eqref{e:p:LW} reads 
	$1/3 \times 1.096 \leq 1.096 \leq 1.135 \times 1.096$.
	%
	%
	%
	%
	%
	If we use any of the iterates $(n^{-3/2} (1 - n^{-1})^{k})_{n \geq 1}$ as 
	the initial guess $x_0$,
	then \eqref{e:p:LW} reads
	$1/3 \times 1/\sqrt{2} \leq 1/2 \leq 1.135 \times 1/\sqrt{2}$.
\end{example}

%

%

%
%
Our primary motivation for introducing $r \geq 0$ in \eqref{e:LWE}
was to estimate the number of iterations needed
for
the stopping rule based on the discrepancy principle \eqref{e:MDPk}.
We briefly comment on this.
Fix $\tau > 1$.
In view of \eqref{e:dade} and \eqref{e:tau}
we have
\begin{align}
	\left|
		\norm{y^\dag - T x_k}{}
		-
		\norm{y^\delta - T x_k^\delta }{}
	\right|
	\leq
	\norm{ (y^\dag - T x_k) - (y^\delta - T x_k^\delta) }{}
	\leq
	\delta
\end{align}
for each iteration $k \geq 0$.
Hence, if 
$\norm{ y^\dag - T x_k }{} \leq \delta (\tau - 1)$
then \eqref{e:MDPk} is satisfied.
Using \eqref{e:p:LW} with $r = 1/2$,
this is fulfilled
for any $k \geq 0$ such that
$ 
	\delta (\tau - 1)
	\leq
	{\varepsilon}_k^{\nu + 1/2}
	c_2
	\normiii{x^\dag - x_0}{\nu}
$. 
%
%
This implies at most $k^\star \sim \delta^{-2 / (2 \nu + 1)}$ 
iterations
until \eqref{e:MDPk} is met,
as in \cite[Theorem 6.5]{EnglHankeNeubauer1996}
but with the slightly weaker assumption that 
the data 
is in $\IX_\nu$ rather than in $X_\nu$.
To reproduce the conclusion of \cite[Theorem 6.5]{EnglHankeNeubauer1996},
we still need to verify that
the discrepancy principle \eqref{e:MDPk} implies
the error rate
$\norm{ x^\dag - x_k^\delta }{} \lesssim \delta^{2 \nu / (2 \nu + 1)}$.
Here we cannot use 
the inequality 
\cite[(4.66)]{EnglHankeNeubauer1996},
\begin{align}
	\norm{ x^\dag - x_k }{}
	\leq
	\norm{ x^\dag - x_0 }{\nu}^{1 / (2 \nu + 1)} 
	\norm{ y^\dag - T x_k }{}^{2 \nu / (2 \nu + 1)}
	,
\end{align}
as in \cite[Proof of Theorem 6.5]{EnglHankeNeubauer1996}
for
the corresponding error rate
with data in $X_\nu$.
Instead, we estimate
\begin{align}
	\norm{ x^\dag - x_k }{}^2
	& =
	\int_{[0, \infty)} (1 - \sigma \lambda)^{2k} d\mu(\lambda)
\\
	& 
	\leq
	\int_{[0, \varepsilon_{k^\star})} d\mu(\lambda)
	+
	\varepsilon_{k^\star}^{-1}
	\int_{[\varepsilon_{k^\star}, \infty)} \lambda (1 - \sigma \lambda)^{2k} d\mu(\lambda)
\\
	& 
	\leq
	\varepsilon_{k^\star}^{2 \nu}
	\normiii{\mu}{\nu}^2
	+
	\varepsilon_{k^\star}^{-1}
	\norm{ y^\dag - T x_k }{}^2
	\lesssim
	\delta^{4 \nu / (2 \nu + 1)}
	,
\end{align}
and using this in the error splitting \eqref{e:LW:es} 
yields
$\norm{ x^\dag - x_k^\delta }{} \lesssim \delta^{2 \nu / (2 \nu + 1)}$.
%
%

%


\section{Conclusions}
\label{s:99}

We have 
introduced and investigated
families of Banach spaces 
and
their interrelations:
the Hilbert scale $X_\nu$, the interpolation spaces $X_{\nu:\gamma}$,
and
the spaces $\IX_\nu = X_{\nu : 2 \nu}$.
We have shown that 
the interpolation spaces $X_{\nu:\gamma}$
are most adequate for the characterization of convergence rates 
in (iterated) Tikhonov regularization,
while
$\IX_\nu$ are better suited
for the Landweber iteration.
\REV{%
This insight should facilitate
the verification 
of 
the convergence rates.
For instance,
in many situations,
$X_{\nu:\gamma}$
can be understood as (a subspace of)
the
Besov space $B_{2,\infty}^s$, 
for which many characterizations are known.
}

\section*{Acknowledgments}

I thank 
J.~Flemming for providing me with a copy of \cite{Flemming2012};
M.~Hansen for the reference to the Gagliardo completion;
O.~Scherzer for general comments on the initial draft;
the anonymous referees for their comments,
in particular
concerning quasioptimality \cite{RausHamarik2007} and Besov spaces.
The note was mainly written while at RICAM, Austrian Academy of Sciences, Linz (AT).
Supported in part by ANR-12-MONU-0013.

\section*{References}

\bibliographystyle{iopart-num}
\bibliography{refs}

\end{document}